\theoremstyle{definition}
\newtheorem{definition}{Definition}
\newtheorem{theorem}{Theorem}
\newtheorem{lemma}[theorem]{Lemma}
\newtheorem{remark}[theorem]{Remark}
\title{On the Hausdorff dimension of the Rauzy gasket}
\author {Artur Avila}
\address{CNRS UMR 7586, Institut de Math\'ematiques de Jussieu - Paris Rive Gauche, Batiment Sophie Germain, Case 7021, 75205 Paris Cedex 13, France\\
and IMPA, Estrada Dona Castarino 110, 22460-320, Rio de Janeiro, Brazil}
\email{artur@math.jussieu.fr}
\author{Pascal Hubert}
\address{Institut de Math\'ematiques de Marseille, 39 rue F. Joliot-Curie, 13453 Marseille Cedex 20, France}
\email{pascal.hubert@univ-amu.fr}
\author {Alexandra Skripchenko}
\address{Faculty of Mathematics, National Research University Higher School of Economics, Vavilova St. 7, 112312 Moscow, Russia}
\email{sashaskrip@gmail.com}
\begin{document}
\maketitle
\begin{abstract} In this paper, we prove that the Hausdorff dimension of the Rauzy gasket is less than 2. By this result, we answer a question addressed by Pierre Arnoux. Also, this question is a very particular case of the conjecture stated by S.P. Novikov and A. Maltsev in 2003.  
\end{abstract}

\section{Introduction}
The Rauzy gasket (see Figure \ref{RaGu}) was described for the first time by G. Levitt in 1993 in \cite{L} and was associated with the simplest example of pseudogroups of rotations. Later the Rauzy gasket was studied by I. Dynnikov and R. De Leo (see \cite{DD}) in connection with Novikov's problem (\cite{N}) of plane sections of triply periodic surfaces. In \cite{AS} independently P. Arnoux and S. Starosta  reintroduced this object as the subset of standard 2-dimensional simplex associated with letter frequencies of ternary episturmian words. The name Rauzy gasket was used for the first time in \cite{AS}. 

The same fractal appears in connection with systems of isometries of thin type that are described by 2 independent parameters. A detailed description of the last approach is provided in the next section. In all these cases, the Rauzy gasket plays the role of a parameter space endowed with a dynamics by piecewise projective maps.

It was proved by Levitt and J.-C. Yoccoz in \cite{L}, Arnoux and Starosta in \cite{AS} and by Dynnikov and De Leo ( \cite{DD}) by different techniques that the Rauzy gasket has zero Lebesgue measure (see, in particular, \cite{MN} for the main approach used by Arnoux and Starosta to prove their result).

Hausdorff dimension of the Rauzy gasket was estimated numerically in \cite{DD} ($1.7$ and $1.8$ were suggested as lower and upper bounds). However, there were no theoretical estimates. Arnoux asked whether this Hausdorff dimension is less than 2 or equal to 2 (see also \cite{AS} for other interesting open questions). The same problem but for much more general situation was posed by A. Maltsev and S.P. Novikov in \cite{NM}.
In this paper we prove:
\begin{theorem} \label{thm:main}
The Hausdorff dimension of the Rauzy gasket is less than 2. 
\end{theorem}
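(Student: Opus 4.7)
\emph{Setup.} The Rauzy gasket $R$ is naturally the attractor of an iterated function system on the standard $2$-simplex $\Delta$: three explicit projective contractions $T_1, T_2, T_3$ associated to the three Arnoux--Rauzy substitutions each fix a vertex $e_i$ of $\Delta$ and map $\Delta$ into three subtriangles with disjoint interiors, and $R = \bigcap_{n \geq 0} \bigcup_{w \in \{1,2,3\}^n} T_w(\Delta)$. Each vertex $e_i$ is a parabolic fixed point of the corresponding branch $T_i$: in affine coordinates on $\Delta$ the differential $DT_i(e_i)$ equals the identity, so $T_i$ is not a contraction in any neighbourhood of $e_i$. This parabolicity is exactly what prevents a direct Moran or Bowen argument from immediately yielding $\dim_H R < 2$, and what distinguishes Theorem \ref{thm:main} from the Lebesgue-null results of Levitt--Yoccoz, Arnoux--Starosta and De Leo--Dynnikov.

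\emph{Inducing away from the vertices.} The plan is to replace the three-branch IFS by a first-return induced version away from the three parabolic vertices. I would fix small neighbourhoods $U_i$ of $e_i$, set $K = \Delta \setminus \bigcup_i U_i$, and group consecutive identical letters: the induced IFS on $K$ has countably many branches indexed by words of the form $w = i^k j$ with $j \neq i$ and $k \geq 1$. Each $T_w$ is a composition of projective maps, so the induced IFS enjoys uniform bounded distortion and, by construction, uniform hyperbolicity on $K$. The points of $R$ that are \emph{not} captured by this induction correspond to infinite symbolic words using at most two distinct letters from $\{1,2,3\}$; since each pair $\{T_i, T_j\}$ preserves the edge of $\Delta$ opposite to the third vertex, these points all lie on the three edges of $\Delta$, a set of Hausdorff dimension $1$. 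Hence $\dim_H R = \max\bigl(1, \dim_H \Lambda_K\bigr)$, where $\Lambda_K$ is the limit set of the induced IFS.

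\emph{Pressure gap at $s=2$.} By the Bowen--Mauldin--Urba\'nski formula for countable conformal IFS, $\dim_H \Lambda_K$ is the unique $s \geq 0$ at which the topological pressure $P(-s \log |DT_w|)$ vanishes. To prove $\dim_H R < 2$ it therefore suffices to show $P(-2 \log |DT_w|) < 0$, which amounts to summability (with enough slack) of $\sum_w \|DT_w\|^2$ over all induced words. Expanding $T_i$ in local charts at $e_i$ produces a degenerate parabolic normal form with $DT_i(e_i) = I$ and quadratic leading correction, from which I would extract an asymptotic $\|DT_i^k\| \asymp k^{-\alpha}$ with an explicit exponent $\alpha > 0$. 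The main obstacle I expect is to carry out this parabolic calculation carefully enough to verify that $\alpha$ exceeds the critical value $1/2$ in \emph{every} transverse direction at each vertex, and then to combine the resulting polynomial tail $\sum_k k^{-2\alpha}$ with a uniform exponential contraction on the non-parabolic transitions $T_j$ ($j \neq i$) so as to produce the strict inequality $P(-2 \log |DT_w|) < 0$ rather than mere convergence of the $s=2$ series.
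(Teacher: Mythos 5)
Your setup matches the paper's in its broad strokes: the three parabolic branches are the projectivized Rauzy maps, and your grouping of consecutive identical letters into blocks $i^k j$ is exactly the Zorich-type acceleration the paper uses, after which the induced map is uniformly expanding (the paper verifies this explicitly, computing $|DT|=(na-(n-1))^{-3}$ on the $n$-th Markov cell). But your mechanism for the dimension bound has a genuine gap. The induced branches are projective maps of a $2$-simplex and are \emph{not conformal}: on the $n$-th cell the two singular values of the contracting branch scale like $n^{-1}$ and $n^{-2}$ (consistent with Jacobian $\asymp n^{-3}$; this is visible from the vertices of $\Delta^{n}$ listed in the paper, whose pairwise distances are of both orders $n^{-1}$ and $n^{-2}$). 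So the Bowen--Mauldin--Urba\'nski formula for countable \emph{conformal} IFS does not apply, and $\sum_w \|DT_w\|^s$ with the operator norm is not the right covering sum. Near $s=2$ the correct quantity is the singular value function $\sigma_1(w)\,\sigma_2(w)^{s-1}$, which at $s=2$ equals $|\det DT_w|$; summed over all words of depth $n$ this is the Lebesgue measure of the union of depth-$n$ cylinders, so ``negative pressure at $s=2$'' is precisely the exponential decay of that measure --- the actual content of the theorem, not something that follows from the one-level computation $\sum_k k^{-2\alpha}<\infty$. Worse, the operator-norm pressure you propose satisfies $\sum_{|w|=n}\sigma_1(w)^2=\sum_{|w|=n}|\det DT_w|\cdot(\sigma_1/\sigma_2)(w)$, and the eccentricity factor can grow along words, so your criterion could fail to hold even though the theorem is true. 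Your closing sentence flags the need for ``strict inequality rather than mere convergence,'' but no argument is offered, and that is where all the work lies.

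The paper supplies exactly this missing ingredient by a route that never invokes conformality: Kerckhoff-style conditional-probability estimates for the Rauzy cocycle give $P_{q}(M(B_{\gamma}q)>TM(q))\le CT^{-\delta}$ for paths avoiding a fixed word (Theorem \ref{thm:proba-estimate}); this yields an exponential tail for the roof function of the suspension, hence the ``fast decaying'' property $\sum_{\mu(\Delta^{\underline l})\le\varepsilon}\mu(\Delta^{\underline l})\le C_1\varepsilon^{\alpha_1}$; and the Avila--Delecroix theorem converts fast decay together with exponential decay of cylinder measures ($\delta>0$, which follows from uniform expansion) into $HD\le 2-\min(\delta,\alpha_1)<2$. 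If you wish to keep the IFS formulation you must replace conformal pressure by an affinity (singular-value) pressure and still prove exponential decay of cylinder measures, i.e.\ reprove the distortion estimates. One smaller slip: the points missed by the induction are those with eventually constant itineraries (a countable set accumulating at the vertices), not the words omitting a letter; the latter do lie on the edges but are perfectly well captured by the induced system. Either way that exceptional set is harmless.
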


\begin{remark}
Our statement also proves the conjecture about the Hausdorff dimension of the set of  chaotic regimes formulated by A. Maltsev and S.P. Novikov that we mentioned above for a very particular family of surfaces. This  result follows directly from our theorem and the construction in \cite{DD}.
\end{remark}

\begin{remark} An upper bound can be deduced from the proof of Theorem \ref{thm:main} but it would be much weaker than the known numerical estimates. 
\end{remark}

\begin{figure}[h]
\centering
\vspace{-20cm}
\hspace{-5cm}
\includegraphics[scale=1.1]{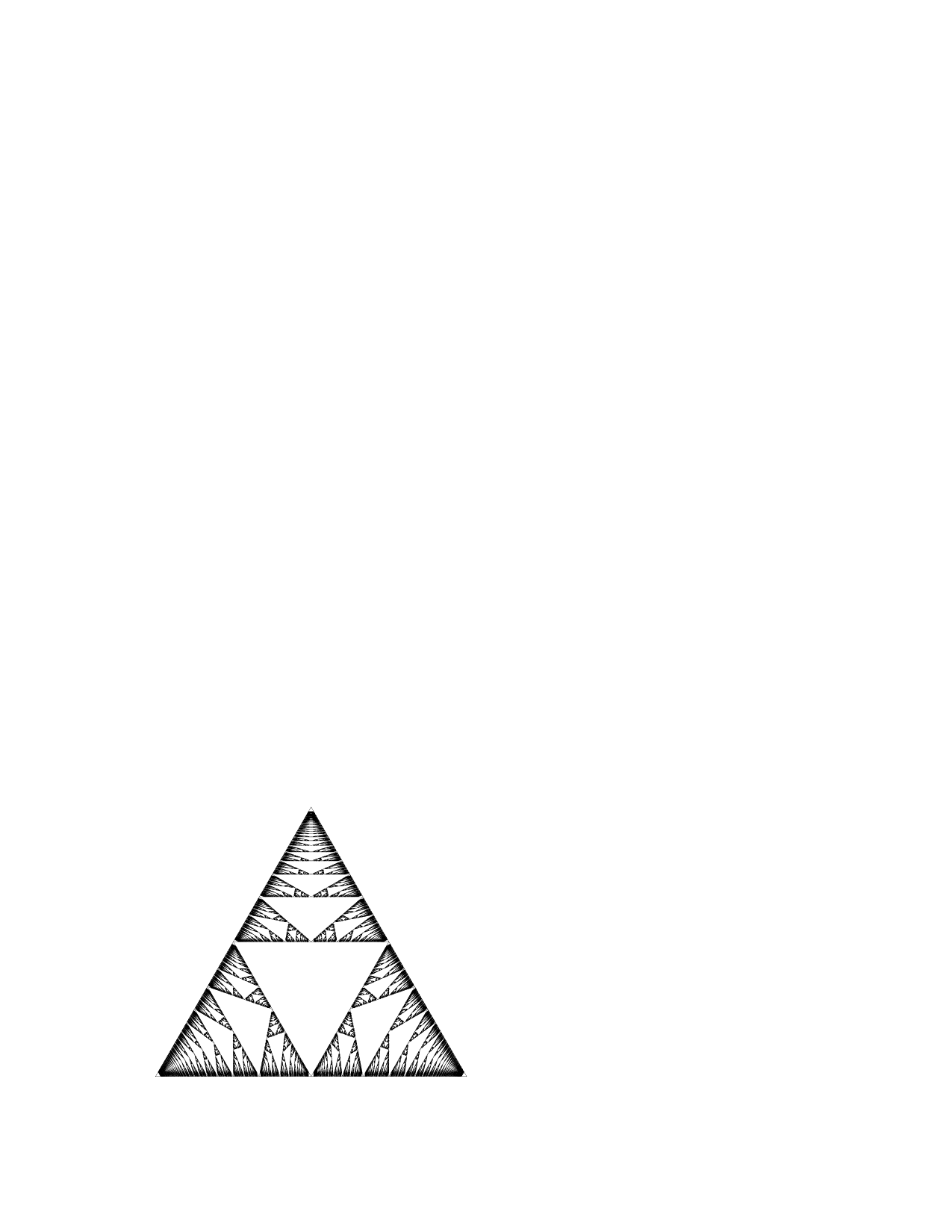}
\vspace{-3cm}
\caption{The Rauzy Gasket}
\label{RaGu}
\end{figure}

\subsection{Organization of the paper} 

The paper is organized as follows. 

In section \ref{Def} we recall the definition of systems of isometries and describe the particular family we work with in the current paper. 

In section \ref{RI} we describe the Rauzy induction for systems of isometries and the related symbolic dynamics. In particular, we define the Rauzy gasket in terms of systems of isometries and describe the corresponding Markov map and Markov partition. 

Section \ref{Co} is dedicated to the cocycle associated with the induction. Like in case of IET, the definition requires a suspension construction that is also presented in the same section. This cocycle will be used later for the construction of the suspension flow. 

In section \ref{UE} we prove that the Markov map is uniformly expanding in a sense of \cite{AGY}.

In \ref{DE} and in \ref{ET} we provide some distortion estimates for the cocycle that are based on so called Kerckhoff lemma (see Appendix A in \cite{AGY} and Theorem  4.2 in \cite{AR}).

Section \ref{RF} is about the roof function and the suspension flow: we construct the roof function associated with the cocycle and use this roof function to define the flow. 

In section \ref{ETRF} we prove that the roof function has exponential tails (the idea of the proof comes from Theorem 4.6 in \cite{AGY}).

The proof of Theorem \ref{thm:main} in presented in section \ref{HD}. First, we use the exponential tails of the roof function to check that the Markov map is fast decaying in the sense of \cite{AD}.  So, one can verify that Theorem 26 in \cite{AD} is applicable in our case and it implies the main result.

In the last section \ref{MD} we briefly explain how to extend our result for a multi-dimensional version of the Rauzy gasket. 

\subsection{Acknowledgments} We heartily thank P. Arnoux who gave inspiring talks on the Rauzy gasket and asked the question to the authors. We thank A. Bufetov for interesting discussions on distorsion estimates obtained in \cite{B}. We also thank V. Delecroix for many useful explanations and I. Dynnikov and the unknown referee for several improvements of the first version of the paper. 

The work was partially supported by the projet ANR blanc GeoDyM, by the ERC Starting Grant \textquotedblleft Quasiperiodic\textquotedblright and by the Balzan project of Jacob Palis. The third author was also supported by the Fondation Sciences Math\'ematiques de Paris. 

\section{Definition}\label{Def}
The notion of systems of isometries was introduced by G. Levitt, D. Gaboriau and F. Paulin in \cite {GLP}.
\begin{definition}
A {system of isometries} $S$ consists of a finite disjoint union $D$ of compact subintervals of the real line $\mathbb R$ (\emph{support multi-interval}) together with a finite number $n$ of partially defined orientation preserving isometries $\phi_{j}: A_{j} \to B_{j}$, where each base of $A_{j}, B_{j}$ is a compact subinterval of $D$. 
\end{definition}

See, for example, Fig. \ref{S}.
\begin{figure}[h]
\centering
\vspace{-4cm}
\includegraphics[height=10cm, width=8cm]{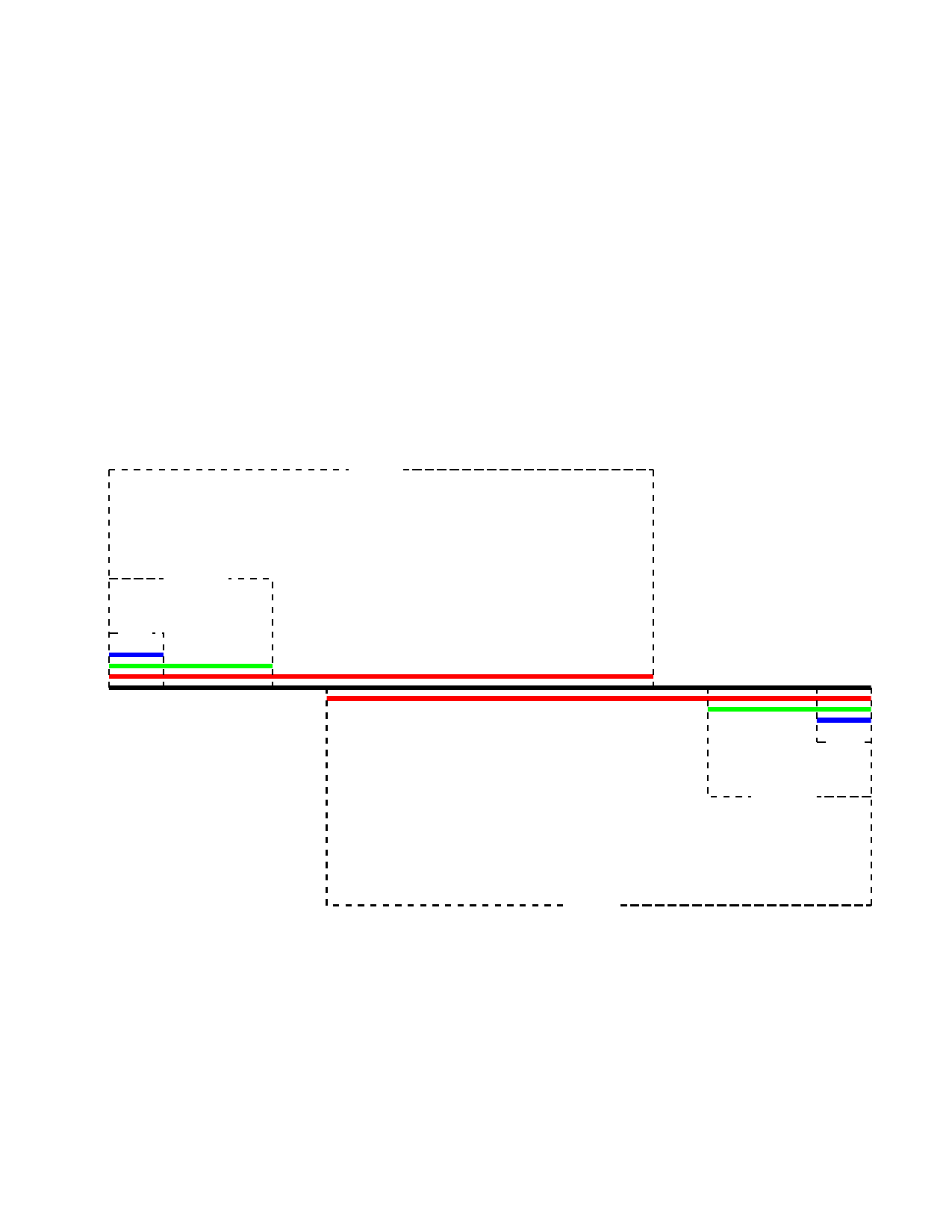}
\put(-142,173){$A_{1}$}
\put(-91,72){$B_{1}$}
\put(-187,147){$A_{2}$}
\put(-45,97){$B_{2}$}
\put(-200,138){$A_{3}$}
\put(-30,108){$B_{3}$}
\vspace{-2.5cm}
\caption{System of isometries}
\label{S}
\end{figure}
Systems of isometries can be considered as a generalization of IET and interval translation mappings (ITM), so it is natural to define the orbit of such system in the same way as it was done for IET.
\begin{definition}
Two points $x,y$ in $D$ belong to the same $S$-orbit if there exists a word $\phi$ that consists of $\phi_{i}$ and inverse to them such that $\phi(x)=y$.
\end{definition}

We denote the orbit of point $x$ by $\Gamma_{x}(S)$. 

Now one can define the equivalence relationship on systems of isometries. Informally, two systems of isometries with the same behavior of orbits are called equivalent. The formal definition we use comes from \cite{D} and is given for $D=[A,B]$ (which is always the case in the current paper).

\begin{definition}
Two systems of isometries $S$ and $S^{'}$ with support intervals $[A,B]$ and $[A^{'},B^{'}]$, respectively, are called \emph{equivalent}, if there is a real number $t \in {\mathbb R}$ and an interval $[A_{0},B_{0}] \subset [A,B] \cap  [A^{'}+t,B^{'}+t]$ such that
\begin {enumerate}
\item every orbit of each of the systems $S$ and $S^{'}+t$ contains a point lying in $[A_{0},B_{0}]$
\item for each point $x\in [A_{0},B_{0}]$ the graphs $\Gamma_{x}(S)$ and $\Gamma_{x}(S^{'})$ are homotopy equivalent through mappings that are identical on $[A_{0},B_{0}]$ and such that the full preimage of each vertex contains only finitely many vertices of the other graph.
\end {enumerate}
\end{definition}

\noindent One can check that it is an equivalence relation.

In the current paper we concentrate on a particular class of systems of isometries.
\begin{definition}\label{def1}
A system of isometries $S$ is called \emph{special} if the following restrictions hold:
\begin{itemize}
\item $D$ is an interval of the real line, say, $[0,1]$;
\item $n=3$;
\item all $A_{i}$ start in $0$;
\item all $B_{i}$ end in $1$;
\item $\Sigma_{i=1}^{3}|A_{i}|=1,$ where $|A|$ means length of subinterval $A$;
\item $|A_{1}|>|A_{2}|>|A_{3}|.$
\end{itemize}
\end{definition}

So, any special system $S$ can be described in the following way:
\begin{equation}
\begin{split}
\label{100}
S=([0,a+b+c];& [0,a]\leftrightarrow[b+c,a+b+c],\\
                    & [0,b]\leftrightarrow[a+c,a+b+c], \\
                    & [0,c]\leftrightarrow[a+b,a+b+c])
\end{split}
\end{equation}
with $a>b>c>0$, $a+b+c=1$.
  
We are only interested in the most generic case of special
system of isometries in the sense that no integral linear relation holds for the parameters a, b, c except those that must hold by definition.
 
We work with special systems of isometries of \emph{thin type}. By the latter we mean a system of isometries for which an equivalent system may have arbitrarily small support (or, equivalently, all orbits are everywhere dense). Thin type was discovered by Levitt in \cite{L} and sometimes is mentioned as \emph{Levitt} or \emph{exotic} case. In \cite{D} Dynnikov showed a strategy how to construct a symmetric 3-periodic surface whose intersections with a family of planes of fixed direction have chaotic behavior using a system of isometries of thin type.

\section {The Rauzy induction and symbolic dynamics} \label{RI}
\subsection{The Rauzy induction without acceleration}
In the theory of IET the Rauzy induction is a Euclid type algorithm that transforms an original IET into another one operating on a smaller interval but equivalent from the point of view of the topology of the corresponding measured foliation. Its iteration can be viewed as a generalized version of continued fraction expansion. This process can also be considered as a variation of the Rips machine algorithm for band complexes in the theory of {$\mathbb R$}-trees (\cite{GLP}).  

The modification of the Rauzy induction algorithm for systems of isometries was introduced by Dynnikov in \cite{D}. The main idea is that from any system of isometries one constructs a sequence of systems of isometries equivalent  to the original one but with a smaller support. Combinatorial properties of this sequence are responsible for ``ergodic" properties of the original system of isometries. 

\emph{The Rauzy induction} for a special system of isometries is a recursive application of admissible transmissions followed by reductions as described below.

\begin{definition}
Let $$S=([0,a+b+c];[0,a]\leftrightarrow [b+c,a+b+c];$$
$$ [0,b] \leftrightarrow [a+c,a+b+c]; [0,c]\leftrightarrow [a+b, a+b+c])$$
be a special system of isometries. So, two of the subintervals,
$\left[a+c,a+b+c\right]$ and $\left[a+b,a+b+c\right]$, say, are contained in the third one $\left[b+c,a+b+c\right]$, say. Let $S^{'}$ be the system of isometries obtained from $S$ by replacing the pair $\left[0,b\right]\leftrightarrow\left[a+c,a+b+c\right]$
by the pair $\left[0,b\right]\leftrightarrow\left[a-b,a\right]$ and the pair $\left[0,c\right]\leftrightarrow\left[a+b,a+b+c\right]$ by the pair $\left[0,c\right]\leftrightarrow\left[a-c,a\right]$

We say that $S^{'}$is obtained from $S$ by a \emph{transmission} (on the right).
\end {definition}

\begin{definition}
Let
$$
S=\left(\left[A,B\right];\left[a_{1},b_{1}\right]\leftrightarrow\left[c_{1},d_{1}\right];\left[a_{2},b_{2}\right]\leftrightarrow\left[c_{2},d_{2}\right];\left[a_{3},b_{3}\right]\leftrightarrow\left[c_{3},d_{3}\right]\right)
$$
be a system of isometries (not necessarily special) and let $d_{1}=B$. We call all
endpoints of our subintervals \emph{critical points}. Assume that
the point $B$ is not covered by any interval from $S$ except $[c_1,d_{1}]$
and that the interior of the interval $\left[c_{1},d_{1}\right]$ contains
a critical point. Let $u$ the rightmost such point. Then the interval $[u,B]$ is covered only by one interval from our system. Replacing the pair $\left[a_{1},b_{1}\right]\leftrightarrow\left[c_{1},d_{1}\right]$
with $\left[a_{1},b_{1}-d_{1}+u\right]\leftrightarrow$$\left[c_{1},u\right]$
in $S$ with simultaneous cutting off the part $[u,B]$ from the support interval will be called a\emph{ reduction on the right} (of the pair $\left[a_{1},b_{1}\right]\leftrightarrow\left[c_{1},d_{1}\right]$). 
\end{definition}

Note that application of the Rauzy induction to a special system of isometries gives us a special system of isometries again (see Fig. \ref{1}). 
The pair of subintervals that was reduced is called a \emph{winner} (like in a case of IET). 
\begin{figure}[h]
\includegraphics[width=9cm,height=11cm]{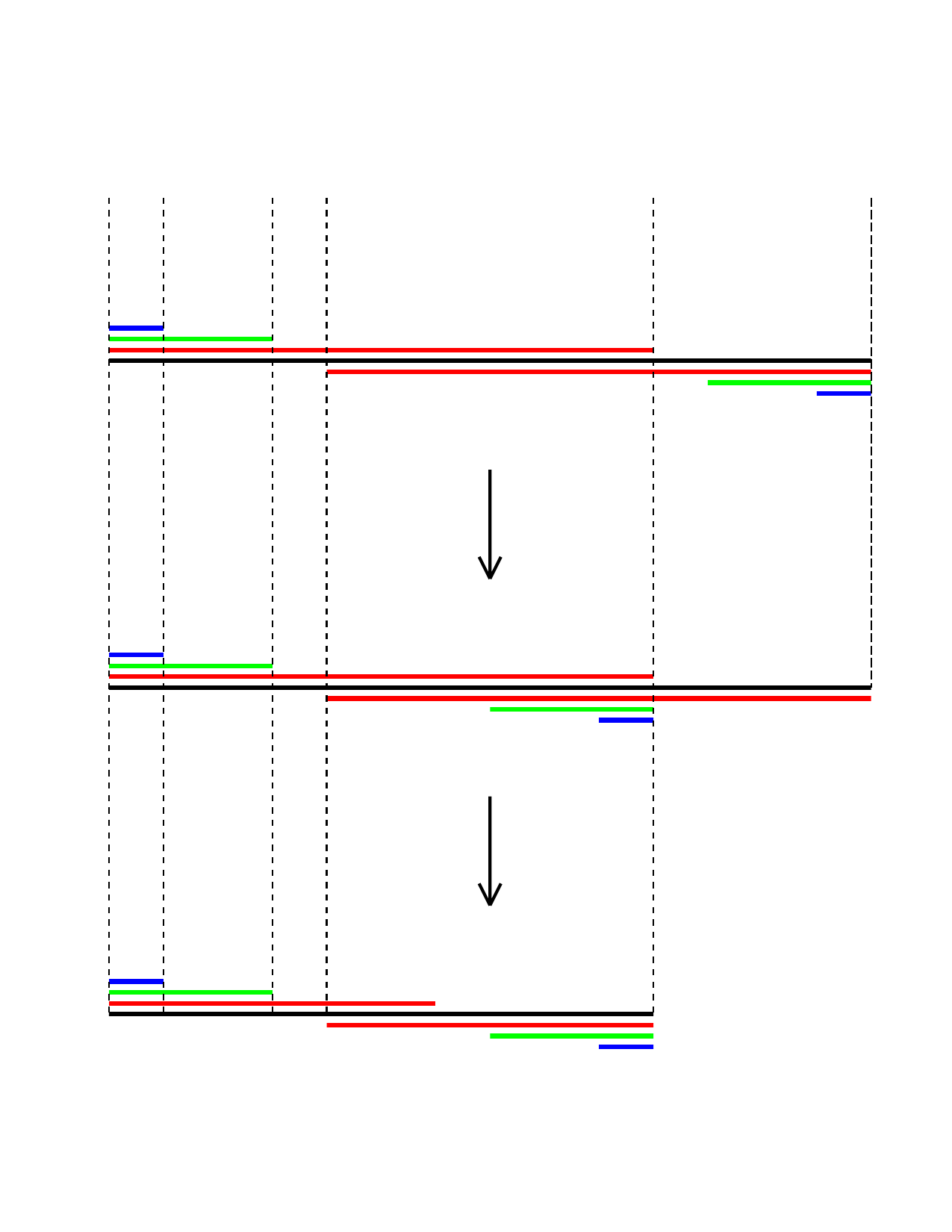}
\put(-214,265){$c$}
\put(-230,265){$0$}
\put(-185,265){$b$}
\put(-178,265){$b+c$}
\put(-83,265){$a$}
\put(-40,265){$a+b+c$}
\vspace{-1cm}
\caption{The Rauzy induction: transmission of $b$ and $c$ intervals and reduction of $a$-interval.}
\label{1}
\end{figure} 

Following \cite{D}, we have the following obvious 
\begin{lemma}
The Rauzy induction does not influence on the existence of the finite orbits or on their property to be everywhere dense: the origin and the image are equivalent.
\end{lemma} 

We say that a system of isometries has a \emph{hole}
if there are some points in the support interval that are not covered by
any interval from $S$. This means in particular that our system has points
with finite orbits. Therefore, one can stop the Rauzy induction once it results in a system with a hole.

One can check that a system of isometries of thin type is exactly such a system for which the Rauzy induction can be applied for infinite number of times, and we never get a hole (see \cite{S12} for details).

\subsection{The Rauzy gasket}
Let $\EuScript{A} =\{ 1,2,3 \}.$ We enumerate 3 subintervals (from the biggest to the smallest) and check what happens with them during the Rauzy induction. 
Since in accordance with definition \ref{def1} the intervals are always enumerated from the biggest to the smallest, we sometimes need to change this enumeration after a step of the Rauzy induction. The configuration then can be coded by permutations of three elements. 

So, like in case of IET, for each special system of isometries we associate data of two types: a collection of three lengths ($a,b,c$) and the order of the subintervals with respect to the original one. Thus, the parameter space $\EuScript{V}= \mathbb R^{3}\times S_{3},$ with the normalizing restriction $a+b+c=1.$ 

One step of the Rauzy induction can be coded by one of the following data collection (we express the old length of subintervals in terms of the new one):
\begin{itemize}
\item Matrix 
$$R_{1}=\begin{pmatrix}
1 & 1 & 1\\
0 & 1 & 0 \\
0 & 0 & 1
\end{pmatrix},$$
permutation $(1,2,3)$;
\item Matrix $$R_{2}=\begin{pmatrix}
1 & 1 & 1\\
1 & 0 & 0 \\
0 & 0 & 1
\end{pmatrix},$$
permutation $(2,1,3)$;
\item Matrix $$R_{3}=\begin{pmatrix}
1 & 1 & 1\\
1 & 0 & 0 \\
0 & 1 & 0
\end{pmatrix},$$
permutation $(3,1,2).$
\end{itemize}

Comparing formulas for the Rauzy induction with the maps that appear in \cite{AS} in description of the Rauzy gasket as an iterated function system, it is easy to see that the following statement holds:
\begin{lemma}
The set of parameters $(a,b,c)$, such that corresponding special systems of isometries are of thin type, forms the Rauzy gasket.
\end{lemma}
 
In the current paper we use this property as a definition of the Rauzy gasket. 

\subsection{The Accelerated Rauzy induction} \label{Rn}

One can construct an \emph{accelerated} version of the Rauzy induction. We define a \emph{generalized iteration} of the Rauzy induction by analogy with a step of the fast version of Euclid's algorithm, which involves the division with remainder instead of subtraction of the smaller number from the larger. It may happen that only one of the three pairs of intervals is subject to reduction in several consecutive steps of the Rauzy induction (and the intervals from the second and the third pair are involved only in transmissions). It means that there is the same winner for several consecutive steps of the algorithm.
In this case we consider the result of such a sequence of the Rauzy induction iterations as the result of applying of one generalized iteration. This kind of acceleration for IET was described by Zorich in \cite{Z}. 

\noindent The matrix $R(n)$  of the one step of the accelerated Rauzy induction is the following:
$$\begin{pmatrix}
n & 1 & n\\
1 & 0 & 0 \\
0 & 0 & 1
\end{pmatrix}$$
or 
$$\begin{pmatrix}
n & n & 1\\
0 & 0 & 1 \\
0 & 1 & 0
\end{pmatrix},$$
where $n$ is a number of simple Rauzy inductions included in one generalized iteration.
There is an evident 
\begin{lemma}
$R_{n}$ is the result of $n-1$ applications of $R_{1}$ and one application of $R_{2}$ or $R_{3}$:
$R(n)=R_{1}\cdot\cdots\cdot R_{1}\cdot R_{2}$ or $R(n)=R_{1}\cdot\cdots\cdot R_{1}\cdot R_{3}$
\end{lemma}

\subsection{The Markov Map}\label{mm}
In the case of special systems of isometries the parameter space $X$ is the triangle with vertices $(1:0:0)$, $(0:1:0)$, $(0:0:1)$. 
The Rauzy induction defines a partition of $X$ in the following way:
\begin{itemize}
\item on step zero $X$ is divided into four subsimplices: 
\begin{itemize}
\item $X^{0}_{1}$ with vertices $(1:1:0)$, $(1:0:0)$, $(1:0:1)$, it corresponds to the coding $(1,2,3)$ and $(1,3,2)$;
\item $X^{0}_{2}$ with vertices $(0:1:1)$, $(0:1:0)$, $(1:1:0)$, it corresponds to the coding $(2,1,3)$ and $(2,3,1)$;
\item $X^{0}_{2}$ with vertices $(1:0:1)$, $(0:0:1)$, $(0:1:1)$, it corresponds to the coding $(3,1,2)$ and $(3,2,1)$;
\item $X^{0}_{0}$ with vertices $(1:0:1)$, $(0:1:1)$, $(1:1:0)$, it corresponds to the hole; 
\end{itemize}
\item the renormalized version of the induction map for $l=(a,b,c)$ with $a+b+c=1$ is
$T: X\to X, T(l)=\frac{Rl}{||Rl||}$, where $R$ is the matrix of the induction.
\item after one step of the Rauzy induction one of three subsimplices (depending where the point that we examine was located) will be also divided into four parts in the same way etc.
\end{itemize}
We enumerate the steps of the (non-accelerated) induction by lower $n$ and the number of the part of each step by the upper index $i: X^{i}_{n}$ is the cell with the corresponding address.
\begin{lemma}
$T$ is a Markov map, and $(X^{i}_{n})$ is a Markov partition.
\end{lemma}

The Markov partition is shown on Figure \ref{RaGu}; the Rauzy gasket (black part) is a fractal subset of $X$ determined by the systems of isometries of thin type; the white part corresponds to the systems of isometries such that the hole was obtained after some steps of the Rauzy induction. 

\subsection {The Rauzy graph}
Like in case of IET, we use \emph{the Rauzy graph} to describe the combinatorics of the accelerated Rauzy induction. Acceleration means that the combinatorics changes after each step. In this paper we work with minimal systems of isometries, and the vertex representing the hole can be excluded from the graph (we call this exclusion an ''adjustment"). So it is enough to consider the graph on $6$ vertices.
 
Then, the vertices of the adjusted Rauzy graph are all permutations of 3 elements, and 2 vertices are connected by an arrow if and only if there exists a realization of it by the Rauzy induction. For example, looking at one step of the Rauzy induction, we see that there is $(1,2,3)\rightarrow (2,1,3)$ but there is no arrow between $(1,2,3)$ and $(3,2,1).$ The adjusted Rauzy graph for the accelerated Rauzy induction is shown on Figure \ref{G}.

\begin{figure}[h]
\includegraphics[width=9cm,height=11cm]{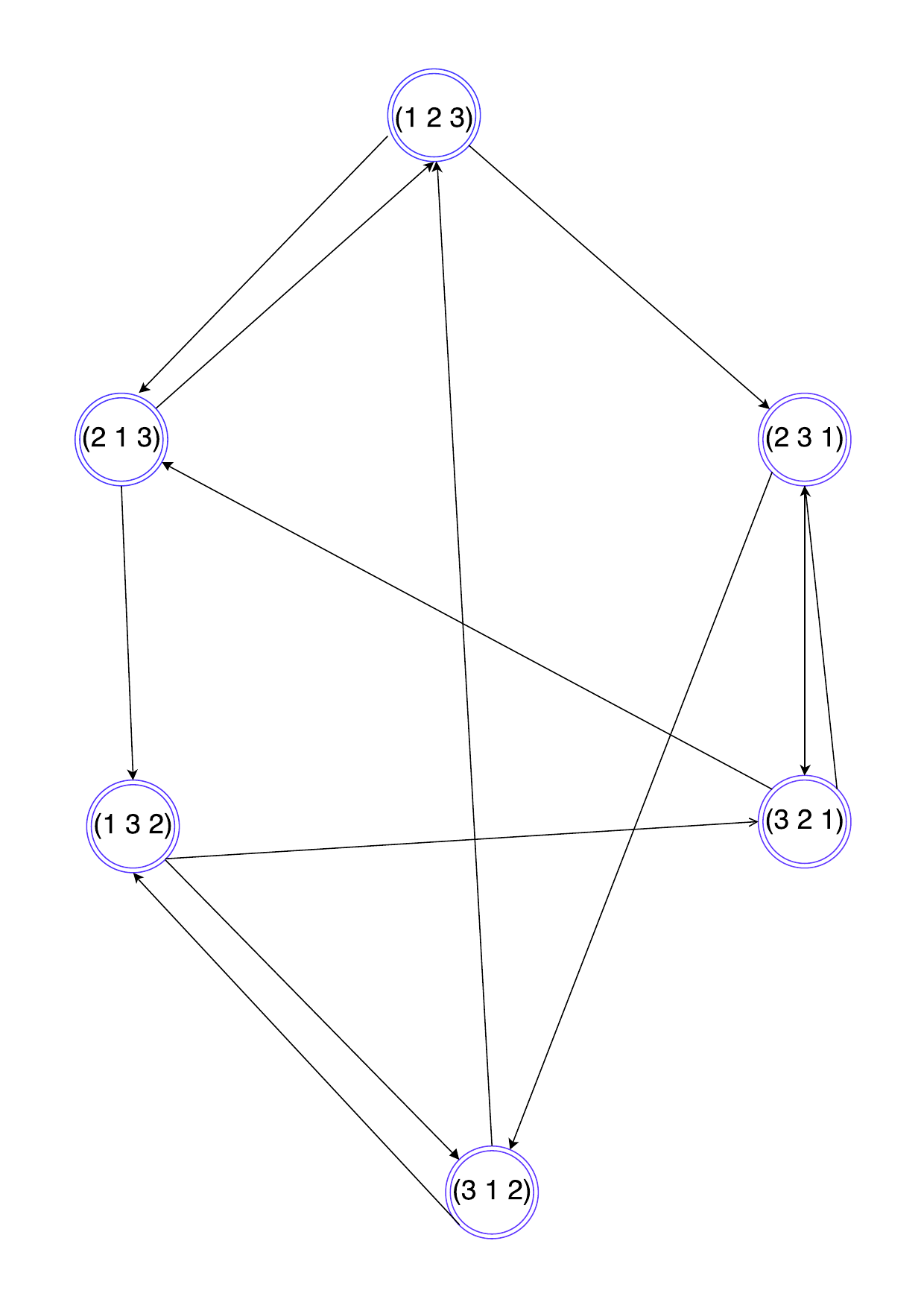}
\vspace{-1cm}
\caption{The adjusted accelerated Rauzy Graph}
\label{G}
\end{figure} 

We have the following obvious 
\begin{lemma}
The Rauzy graph is connected.
\end{lemma}

For future constructions we will also need the following definition (see \cite{AGY}):
\begin{definition}
A path $\gamma$ in the Rauzy graph is called \emph{complete} if every $\alpha \in \EuScript A$ is a winner of some arrow composing $\gamma$.
\end{definition}

\subsection {The Markov shift}
One can also consider the action of the non-accelerated Rauzy induction on the accelerated adjusted Rauzy graph. Then, each vertex of the adjusted Rauzy graph will split into countable number of vertices, and the same happens to the corresponding Markov cell. Each small Markov cell is coded by a permutation (that comes from the coding of vertices of the accelerated Rauzy graph) and a natural number $n$ of steps of the ordinary induction in the corresponding step of the accelerated one.  Then the Rauzy induction provides the Markov shift $\Theta$ in this coding on a countable alphabet. One can associate in a natural way a graph $\Lambda$ with such a Markov shift. $\Lambda$ can be obtained from the Rauzy graph by dividing every vertex into a countable number of vertices and adding a required arrows between these new vertices. 

\begin{definition}
A countable Markov shift $\Theta$ with transition matrix $U$ and set of states $\EuScript{S}$ satisfies \emph{big images and pre-images property}(BIP) if there exist $\{i_{1},\cdot,i_{m}\} \in \EuScript{S}$ such that for all $j \in \EuScript{S}$ there are $1\leq k,l \leq m$ for which $u_{i_{k},j}u_{j,i_{l}}=1.$
\end{definition} 

\begin{definition}
A Markov shift is \emph{topologically mixing } if for any $i,j\in \EuScript{S}$ there exists a number $N=N(i,j)$ such that for any $n\geq N$ there is an admissible path of length $n$ on the graph of the shift that connects $i$ and $j$.
\end{definition}
\begin{lemma}
The Rauzy induction defines a countable topologically mixing Markov shift that satisfies BIP property.
\end{lemma}
\begin{proof}
The first part follows from the fact that both of the graphs of the induction are connected. 
In order to obtain BIP property we have to choose $m=6$ and $i_{j}$ each belong to a different vertex of the accelerated Rauzy graph. 
\end{proof}

\section{The suspension complex and the cocycle}\label{Co}
\subsection {Suspension complex}
Here we recall briefly the construction of the suspension complex for systems of isometries from \cite{GLP}. It can be considered as an analogue of the zippered rectangles model suggested by W. Veech (\cite{V}). 

With each special system of isometries we can associate a foliated 2-complex $\Sigma$ (in terms of $\mathbb R$-trees theory, it is a \emph{band complex}). Start with the disjoint union of the support interval (foliated by points) and strips $A_{j}\times [0, 1]$ (foliated by ${*} \times [0, 1]$). We get $\Sigma$ by glueing $A_{j}\times [0, 1]$ to $D$, identifying each $(t,0) \in A_{j}\times {0}$ with $(t,0) \in A_{j} \subset D$ and each $(t,1) \in A_{j}$  with $\phi_{j}(t) \in B_{j}\subset D$. We will identify $D$ with its image in $\Sigma$.
Thus, one gets a 2-dimensional complex with a vertical foliation on it.

Our family of band complexes is a particular class of what appears in geometric group theory as an instrument for describing actions of free groups on $\mathbb R$-trees (see, for example, \cite{BF} for details).
\begin{figure}[h]
\vspace{-1.7cm}
\includegraphics[width=10cm,height=12cm]{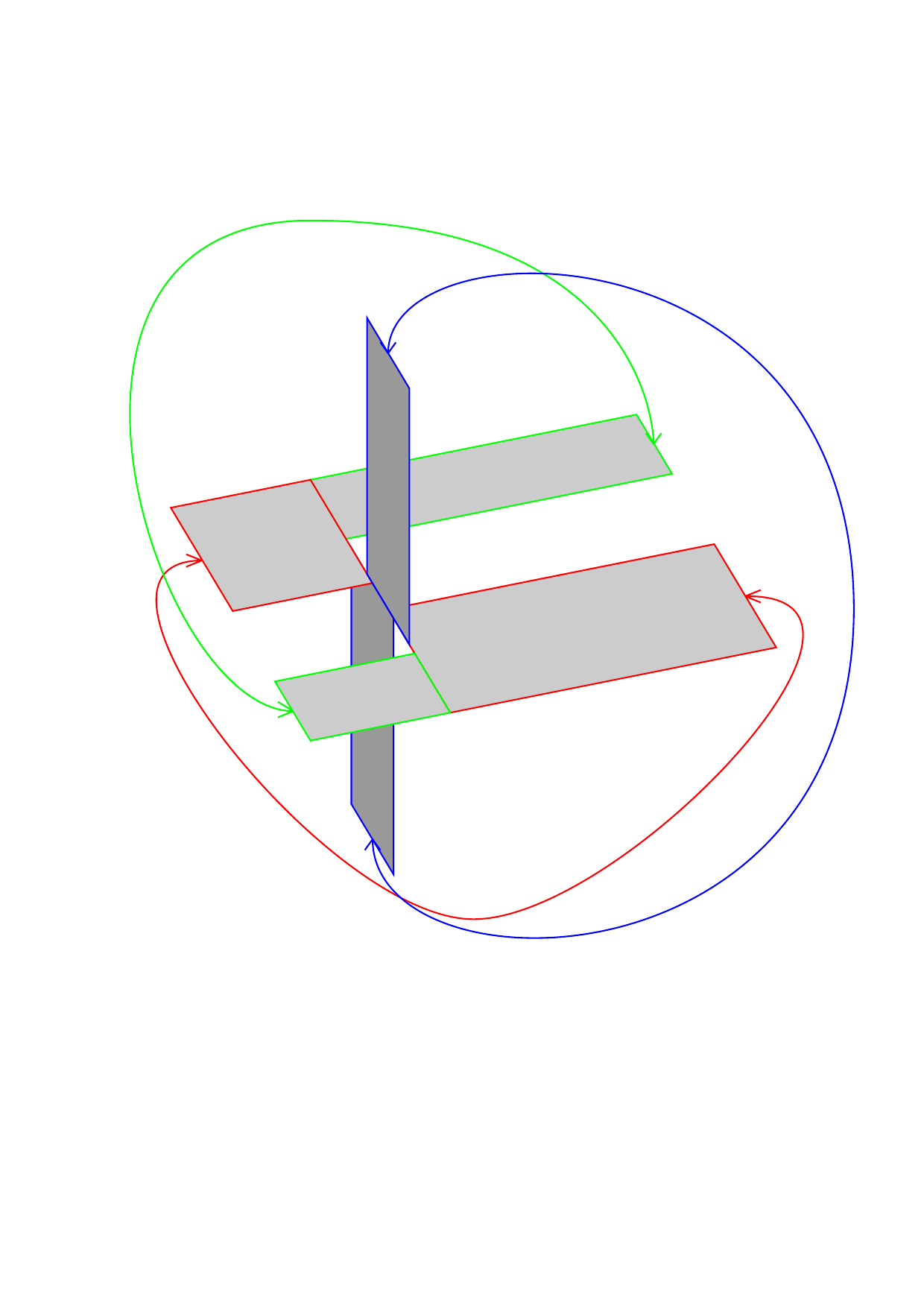}
\vspace{-3cm}
\caption{Band complex}
\label{C}
\end{figure}

\subsection{The cocycle}
One can apply the Rauzy induction not only to a system of isometries but also to a corresponding suspension complex. 
A suspension complex for a special system of isometries contains three bands, each of which has a width (horizontal lengths) and a length (more precisely, vertical length). The matrix $R$ described above tells us how are the widths of bands cut by the Rauzy induction. 
At the same time, the vertical lengths of the bands increase during the same procedure (see \cite{BF} for the description of the Rips machine application to the band complex). Indeed, once we make a transmission, the lengths of all the bands that are not involved in the operation as well as the length of the winner do not change; however, the length of the loser increases exactly by the length of the winner. The reduction does not influence the vertical lengths of bands. 

So, informally, the \emph{cocycle} is responsible for things that happen with vertical lengths of bands during the application of the Rauzy induction. 

More precisely, let $B$ be a matrix of the cocycle. For $n$ steps of the non-accelerated induction that do not form yet the step of an accelerated one (and therefore the combinatorics does not change) we denote by $B(n)$ is the following matrix:
$$\begin{pmatrix}
1 & 0 & 0\\
n & 1 & 0 \\
n & 0 & 1
\end{pmatrix}$$

Thus the matrix $B$ of the cocycle is a product $B(n)$ with different $n$ and the required permutation matrices. 
Let us denote by $B_{\gamma}$ the cocycle matrix that corresponds to the path $\gamma$ in $\Gamma$. 

We need also one more definition from \cite{AGY}:
\begin{definition}
A path $\gamma$ in the Rauzy graph is called \emph{positive}, if the matrix of $B_{\gamma}$ contains only strictly positive entries.
\end{definition}

\begin{lemma}
Any complete path in the Rauzy graph is positive.
\end{lemma}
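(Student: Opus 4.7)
The strategy is to track the sparsity pattern of the cumulative cocycle matrix $B_\gamma$ along the path $\gamma$, and to show that each arrow with winner $\beta$ turns column $\beta$ of $B_\gamma$ strictly positive while preserving the positivity of the columns already known to be positive.

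First, I would make precise the shape of the elementary cocycle matrix $B^{(\beta)}$ associated with an arrow with winner $\beta \in \EuScript{A}$. Consistently with the sample matrix $B$ displayed just before the lemma, $B^{(\beta)}$ has the form $I + n M_\beta$, where $M_\beta$ is nonnegative, supported in column $\beta$, and has strictly positive entries in both off-diagonal positions of that column. In particular each elementary factor is entrywise nonnegative with diagonal entries $\geq 1$.

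Next, I would prove by induction on the length of $\gamma$ the claim: \emph{for every $\beta$ appearing as a winner of some arrow of $\gamma$, column $\beta$ of $B_\gamma$ is strictly positive.} The base case (empty $\gamma$, $B_\gamma = I$) is vacuous. For the inductive step, write $\gamma' = \gamma \cdot e$ where $e$ has winner $\beta$, so that
\begin{equation*}
B_{\gamma'} \;=\; B_\gamma \cdot B^{(\beta)} \;=\; B_\gamma + n\, B_\gamma M_\beta.
\end{equation*}
Because $M_\beta$ is supported in column $\beta$, the columns $j \neq \beta$ of $B_{\gamma'}$ coincide with those of $B_\gamma$, so columns that were strictly positive remain so. For column $\beta$, if $k \neq \beta$ then $(M_\beta)_{k\beta} > 0$, hence $(B_{\gamma'})_{k\beta} \geq n \,(B_\gamma)_{kk}\,(M_\beta)_{k\beta} > 0$, using that the diagonal entry $(B_\gamma)_{kk}$ is itself $\geq 1$ (a property preserved under products of nonnegative matrices with diagonal $\geq 1$). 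The remaining entry satisfies $(B_{\gamma'})_{\beta\beta} \geq (B_\gamma)_{\beta\beta} \geq 1$. Hence column $\beta$ of $B_{\gamma'}$ is fully positive, closing the induction.

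Applying the claim to a complete path $\gamma$, every one of the three columns of $B_\gamma$ is strictly positive, so $B_\gamma$ is entrywise positive and $\gamma$ is positive. The main bookkeeping obstacle is to verify the structural description ``$I + n M_\beta$ supported in column $\beta$'' for \emph{each} arrow of the Rauzy graph; since the permutation at each vertex relabels the bands by size, one must be careful to identify the winner in a frame consistent along the whole path (e.g.\ the fixed alphabet $\EuScript{A}$) rather than in the size-ordered frame that varies from vertex to vertex. If preferred, this point can be sidestepped by a direct finite verification on the six-vertex Rauzy graph.
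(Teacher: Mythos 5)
Your argument is correct and is essentially the paper's own proof: the paper phrases it as ``when $\alpha$ wins, row $\alpha$ (which has diagonal entry $\geq 1$) is added to the other two rows, so after every letter has won once all zeros disappear,'' which is exactly your column/row bookkeeping up to the choice of composition side. The relabeling caveat you raise is handled in the paper the same way, by always working in the original (unpermuted) enumeration.
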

\begin{proof}
We start from identity matrix of the cocycle.
Then, after one step of the induction, if $\alpha$ is a winner, the matrix of the cocycle changes in the following way:  the row with number $\alpha$ is added to the other two rows (here we always use the original enumeration and do not apply the permutations). 
So, if the path is complete, then each row was added to the other two at least once, and then all zero coefficients of the matrix are strictly positive numbers.
\end{proof}

\begin{remark} In \cite{AGY} and \cite{AR} the notion of \emph{strongly positive} path was also used. Namely, the path is strongly positive if it is positive and $(B^T_{\gamma})^{-1}(\theta_{\pi_s}\in\theta_{\pi_e})$ where $\pi_s$ is the starting permutation for $\gamma$, $\pi_e$ is the ending one and $\theta$ are Veech coordinates that were used to define the heights of the bands of zippered rectangles and the positions of saddles (up to which moments the rectangles are ``zippered'').

For us, each positive path is strongly positive in a sense of \cite{AGY} and \cite{AR} because in comparing with the case of IET the suspension construction for the systems of isometries has one significant difference: there is no difference between the past and the future for the orbits of the system, and so there is no orientation for the leaves of the vertical bands. Therefore the condition required for path to be strongly positive is satisfied in our case automatically. 
\end{remark}

\section{Properties of the Markov map}\label{UE}
In this section we prove that the Markov map $T$ that was introduced in \ref{mm} is uniformly expanding.
The main definition that we use came from \cite{AGY}:
\begin{definition}
Let $L$ be a finite or countable set, let $\Delta$ be a parameter space, and let $\{\Delta^{(l)}\}_{(l\in L)}$ be a partition into open sets of a full measure subset of $\Delta.$
A map $Q: \cup_{l} \Delta^{(l)} \rightarrow \Delta$ is a \emph{uniformly expanding} map if:
\begin{itemize}
\item For each $l$, $Q$ is a $C^{1}$ diffeomorphism between $\Delta^{(l)}$ and $\Delta$, and there exist constants $k>1$ (independent of $l$) and $C_{(l)}$ such that for all $x\in \Delta^{(l)}$ and all $v\in Q_{x}\Delta, k||v||\le||DQ(x)v||\le C_{(l)}||v||.$
\item Let $J(x)$ be the inverse of the Jacobian of $Q$ with respect to Lebesgue measure. Denote by $\EuScript{H}$ the set of inverse branches of $Q$. The function $log J$ is $C^{1}$ on each set $\Delta^{(l)}$ and there exists $C>0$ such that, for all $h\in \EuScript{H}$,
$$||D((logJ)\circ h)||_{C^{0}(\Delta)}\le C.$$ 
\end{itemize}
\end{definition}
\begin{remark}
In \cite{AGY} the parameter space is supposed to be John domain (see Definition 2.1 there). One can easily check that the set $X$ in our case is a John domain; however, we will no use any hyperbolic properties of the map and do not actually need this feature.
\end{remark}

So, we have the following: 
\begin{lemma}\label{UEL}
$T$ is a uniformly expanding map with respect to the Markov partition $(X^{i}_{n})$.
\end{lemma}
\begin{remark}
This property is already mentioned in \cite{AS} but we provide a direct proof.
\end{remark}
\begin{proof}
We consider the accelerated Rauzy induction with $n$ simple iterations included in it. Then, the induction map
is given by one of the two matrices $R(n)$ (see section \ref{Rn} for the formula). Without loss of generality, one can consider just one of them (say, the first one).

The corresponding projective map $T=T(n)=\frac{R(n)I}{||R(n)l||}$ is the following:
$$a' = \frac{b}{na-(n-1)},$$
$$b'=\frac{(n+1)a-n}{na-(n-1)}.$$

So, $DT(a,b) = DT(a) = \frac{1}{(na-(n-1))^3}.$

The Markov cell $X_{n}$ where $T$ is defined is determined by several inequalities that came from the matrix of the induction:
$$a< (n+1)b+(n+1)c;$$
$$a>nb+nc;$$
$$a-n(b+c)>c.$$
So, since $c=1-a-b$, the cell has the following vertices: 
$$\Big(\frac{n+1}{n+2}, \frac{1}{n+2}\Big),$$
$$\Big(\frac{n}{n+1}, \frac{1}{n+1}\Big),$$
$$\Big(\frac{2n+1}{2n+3}, \frac{1}{2n+3}\Big).$$
So, on the one hand, $a<\frac{2n+1}{2n+2}$ and $na-(n-1)<\frac{n+2}{2n+2}$. Therefore, $|DT|>\frac{8(n+1)^{3}}{(n+2)^{3}}>(4/3)^{3}.$

On the other hand, $a>\frac{n}{n+1},$ so $|DT|<(n+1)^3.$ It proves the first statement. 

The second condition is equivalent to the following one: 
$$\Big|\frac{DT(a_{1})}{DT(a_{2})}-1\Big|\le C\Big|T(a_{1}, b_{1})-T(a_{2}, b_{2})\Big|.$$
In our case: 
$\frac{DT(a_{1})}{DT(a_{2})} = \Big(\frac{1-x_{2}}{1-x_{1}}\Big)^{3},$ where $x_{i}=n(1-a_{i}), i=1,2.$
$$\Big|\Big(\frac{1-x_{2}}{1-x_{1}}\Big)^3-1\Big|=\Big|\Big(\frac{1-x_{2}}{1-x_{1}}\Big)-1\Big|\Big|\Big(\frac{1-x_{2}}{1-x_{1}}\Big)^2+\Big(\frac{1-x_{2}}{1-x_{1}}\Big)+1\Big|.$$
Due to the inequalities for $a_{i}$ that define the Markov cell we work with, one can conclude that $\frac{n}{n+2}<x_{i}<\frac{n}{n+1}, i=1,2.$
It means that $X=\frac{1-x_{2}}{1-x_{1}}+1$ satisfies the following inequalities: $1<X\le 4.$
So, $$\Big|\Big(\frac{1-x_{2}}{1-x_{1}}\Big)^2+\Big(\frac{1-x_{2}}{1-x_{1}}\Big)+1\Big|=\Big|X^{2}-(X-1)\Big|\le X^{2}+X+1\le 16+1+1=18.$$
Now we consider $$\Big|T(a_{1},b_{1})-T(a_{2},b_{2})\Big|\ge \Big|\frac{(n+1)a_{1}-n}{na_{1}-(n-1)}-\frac{(n+1)a_{2}-n}{na_{2}-(n-1)}\Big|=\frac{\Big|\frac{x_{1}}{1-x{1}}-\frac{x_{2}}{1-x{2}}\Big|}{n}=$$
$$=\frac{\Big|x_{1}-x_{2}\Big|}{n(1-x_{1})(1-x_{2})}\ge \frac{1}{2}\frac {\Big|x_{1}-x_{2}\Big|}{1-x_{1}}.$$
So, we proved the second condition with $C=18\cdot 2=36.$
\end{proof}
\begin{remark}
Using the technique from \cite{MN} one can check that the uniform expanding property of $T$ implies that the Rauzy gasket has zero Lebesgue measure.
\end{remark}

\section{Distortion estimates}\label{DE}
In this section we prove that the Markov map $T$ has a bounded distortion in the sense of \cite{AGY}.
\subsection{Conditional probabilities}
The distortion argument will involve not only the study of Lebesgue measure, but also of its forward images under the renormalization map. So, following the strategy from \cite{AGY} and \cite{AR}, we first construct a class of measures which is invariant as a whole.

Let us consider the adjusted Rauzy graph and some path $\gamma$ in it. It was proved above that the graph is connected and therefore in terms of combinatorics of IET we have only one Rauzy class.

As before, $B_{\gamma}$ is the matrix of the cocycle corresponding to $\gamma$. The original parameter space $X=\Delta$ is a simplex in $\mathbb RP^{2}$. We will be interested in the measure of the following part of it:
$$\Delta'_{\gamma}= B^{T}_{\gamma}\mathbb R^{3}_{+},$$
where $B^{T}$ is a transposed matrix with respect to $B$.

For $q=(q_1, q_2, q_3) \in \mathbb R^{3}_{+}$ we define a measure $\nu_{q}$ on the $\sigma$-algebra $A\subset\mathbb R^{3}$ of Borel sets which are positively invariant ($\mathbb R_{+}A=A$):
$$\nu_{q}(A)=3!\cdot Leb(A \cap \{\lambda\in\mathbb R^{3}_{+}: \left\langle {\lambda,q} \right\rangle<1\}.$$

Equivalently, $\nu_{q}$ can be considered as a measure on the projective space $RP^{2}_{+}.$
One can also check that $\nu_{q}(B^{T}_{\gamma}A)=\nu_{B_{\gamma}q}(A)$ and $\nu_{q}(\mathbb R^{3}_{+})=\frac{1}{q_1q_2q_3}.$

The measures $\nu_q$ are used to calculate the probabilities of realization of different types of combinatorics related to the induction.
Let $\pi$ be the permutation from which $\gamma$ starts and let us denote by $\alpha\in \left\{ {1,2,3} \right\}$ the winner and by $\beta\in \left\{ {1,2,3} \right\}$ the loser of the first iteration of the Rauzy induction (without acceleration). 
Then, the conditional probability related to the given combinatorics can be defined in the following way: $P_{q}(\gamma|\pi)=\frac{q_{\beta}}{(q_{\alpha}+q_{\beta})}.$

More generally, for $\EuScript{A'}\subset \EuScript{A}=\left\{ {1,2,3} \right\}$ and $q\in \mathbb R^{\EuScript{A}}_{+},$ let $N_{\EuScript{A'}}(q)=\prod_{\alpha\in \EuScript{A'}}q_{\alpha}.$ Let also $N(q)=N_{\EuScript{A}}(q).$ Then $$P_{q}(\gamma|\pi)=\frac {N(q)}{N(B_{\gamma}q)}.$$

\subsection {Kerckhoff lemma}
In this section we prove our key estimate that gives a base for the further more subtle estimations of distortion properties of the cocycle matrix. 
The idea that was used for the first time in \cite{K} for IET is the following: in order to control how does the induction distort the vector that originally was balanced, one has to check that the ratio between the norms of the rows (equivalently, columns) of the matrix of the cocycle (equivalently, of the induction matrix) can rarely be very high.
More formally, we have the following: 
\begin{lemma}
For any $T>0, q \in \mathbb{R}^{\EuScript{A}}_{+}, \alpha \in \EuScript{A}$
\begin{equation}\label{Ke}
P_{q}(\Gamma_{\alpha}(\pi), (B_{\gamma}q)_{\alpha}>Tq_{\alpha}|\pi)<T^{-1},
\end{equation}
where $\Gamma_{\alpha}(\pi)$ denotes the set of paths starting at $\pi$ with no winner equal to $\alpha$.
\end{lemma}

\begin{proof}
First, let us note that the Rauzy induction works in the following way: after one step two rows of the matrix of the cocycle increase their norm while the third one remains stable. Once we make a step of the accelerated induction the combinatorics changes and then we start to add another row (not that one that we added before) to two others. One can easily check that in this case it is enough to make one accelerated step and then one usual step of the induction to get the balanced matrix again (by the last we mean the matrix such that the ratio of norms of rows can be evaluated by some constants that do not depend on the coefficients of the matrices). It easily implies the estimation (\ref{Ke}). 

Therefore, our main goal is to prove (\ref{Ke}) for several consequent steps of non-accelerated induction what do not comprise the whole step of the accelerated one.

In this case the path  $\gamma$ we work with is just a loop based at one vertex, say, {1,2,3}. Lets say that we make $n$ ordinary iterations of the Rauzy induction. Sometimes we will refer to this parameter using the term ``time". We also choose some $q=(q_1,q_2,q_3)\in \mathbb R^{3}.$
Then, $$Bq = \begin{pmatrix}
q_{1} \\
nq_{1}+q_{2}  \\
nq_{1}+q_{3}
\end{pmatrix}$$
It is easy to see that the modulus of the first component remains stable while two other moduli increase (as a norm we mean here the value of maximal element).

Now, we follow the strategy suggested in \cite{K} (see also \cite{AGY}, Appendix A). We consider the matrix of the Rauzy induction $R_{\gamma}(k)=B_{\gamma}^T$, where $k$ is the time. 
Let us denote the columns of the matrix $R_{\gamma}(k)$ by $v_{1}(k), v_{2}(k), v_{3}(k)$, respectively. Then the norm of $v_{2}(k)$ and $v_{3}(k)$ increase with $k$ while norm of $v_{1}(k)$ remains stable. 
Indeed, the image of the original parameter space is the following:
$$V(k) = \{x\in\mathbb R^3_{+}: x=\sum_{i=1}^{3} v_{i}(k) \alpha_{i}, \sum_{i=1}^3 \alpha_{i}=1\}.$$

One can check that 
$$\tilde v_{2}(k)= kv_{1}+v_{2}, \tilde v_{3} = kv_{1}+v_{3},$$
where $\tilde v$ means the value of $v$ after $k$ iterations and $k\le \frac {\alpha_{1}}{\alpha_{i}}, i=2,3,$ since $\gamma$ is a loop and during the time $n$ the path never leaves the vertex $(1,2,3)$.

We evaluate the probability of the following event: $$(B_{\gamma}q)_{\alpha}>Tq_{\alpha}$$ for every $\alpha$.
So, we have the following: 
$$|v_{2}+nv_{1}|>T|v_{2}|,$$
and then
$$\frac {\alpha_{1}|v_{1}|}{\alpha_{2}|v_{2}|}\ge\frac {n|v_{1}|}{|v_{2}|}>T-1.$$
Here we used the fact that all components of $v$ are positive. 
The same inequality holds for $v_{3}$ because at the beginning we had a balanced vector.

As it was mentioned above, the probability we are interested in can be expressed as the ratio of the measures of the part of the parameter space that is defined by the given inequalities and combinatorics.
Lets us estimate this ratio: 
$$\frac {3!|v_{1}||v_{2}||v_{3}|}{3!|v_{1}||v_{2}+nv_{1}||v_{3}+nv_{1}|} = \frac {|v_{2}|}{|v_{2}+nv_{1}|}\cdot\frac {|v_{3}|}{|v_{3}+nv_{1}|}<\frac {|v_{2}|}{n|v_{1}|}\cdot\frac {|v_{3}|}{n|v_{1}|}<
\frac {1}{(T-1)^2}<\frac{1}{T}.$$
\end{proof}

\subsection {Further Distortion Estimates}
In this section we apply Kerckhoff lemma to obtain some more subtle estimations on the distortion.
We mainly follow the strategy suggested in Appendix A of \cite{AGY}.

Before we actually state the theorem, let us introduce some useful notations:
$$\EuScript{A'} \subset \EuScript{A};$$
$$m_{\EuScript{A'}}(q) = min_{\alpha\in\EuScript{A'}}q_{\alpha};$$
$$m(q)=m_{\EuScript{A}}(q)$$
$$m_{k}(q) = max_{\{\EuScript{A'} \subset \EuScript{A}: |\EuScript{A'}|=k\}}m_{\EuScript{A'}}(q);$$
$$M_{q} = max_{\alpha\in\EuScript{A}}q_{\alpha}.$$

The principal result of this part is the following 
\begin{theorem}\label{A2}
There exists $C>1$ such that for all $q\in \mathbb{R}^{\EuScript{A}}_{+}$
$$P_{q}(M(B_{\gamma}q)<Cmin\{m(B_{\gamma}q),M(q)\})>C^{-1}.$$
\end{theorem}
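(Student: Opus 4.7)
Plan: The proof will combine two complementary estimates. Kerkhoff's lemma (the preceding result) furnishes an upper bound on $M(B_{\gamma}q)$ with high probability, while the positivity of complete paths (the earlier lemma asserting that every complete path is positive) furnishes a lower bound on $m(B_{\gamma}q)$; together these will yield both inequalities in the statement.

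First, I would apply the Kerkhoff-type lemma to each index $\alpha\in\mathcal{A}$ with a common parameter $T>1$. Using $q_{\alpha}\le M(q)$ together with a union bound over the three coordinates, I expect to obtain
$$P_{q}\bigl(M(B_{\gamma}q) > T M(q)\bigm|\pi\bigr) \;\le\; 3/T,$$
so that with probability at least $1-3/T$ the bound $M(B_{\gamma}q)\le T M(q)$ holds automatically; this will already give the second inequality $M(B_{\gamma}q) < C M(q)$.

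To obtain the balance $M(B_{\gamma}q) < C m(B_{\gamma}q)$, I would exploit completeness. Since the Rauzy graph is finite and connected, there is a bounded length $L$ and, from each vertex $\pi$, a complete path $\gamma_{0}(\pi)$ of length $\le L$. The finitely many cocycle matrices $B_{\gamma_{0}(\pi)}$ are strictly positive, so their entries lie in an interval $[c_{*},C_{*}]$ with $c_{*}>0$. Whenever the random path $\gamma$ begins with such a $\gamma_{0}(\pi)$, positivity forces $m(B_{\gamma}q)\ge c_{*} M(q)$, because every subsequent Rauzy step only adds non-negative contributions to each coordinate. Intersecting this event with the Kerkhoff event then yields simultaneously $M(B_{\gamma}q)\le T M(q)$ and $m(B_{\gamma}q)\ge (c_{*}/T)\,M(B_{\gamma}q)$, so that $C=\max(T,T/c_{*})$ realizes both inequalities.

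The hard part will be showing that the ``contains an initial complete sub-path'' event has probability bounded below uniformly in $q$: the probability of any fixed complete path $\gamma_{0}$ equals $N(q)/N(B_{\gamma_{0}}q)$, which degenerates when $q$ is very unbalanced. My plan is to introduce a stopping time $\tau$ equal to the first moment at which a complete sub-path has appeared, and to exploit the uniformly expanding / John-domain structure of the projective Rauzy map (the lemma on uniform expansion) to show that $P_{q}(\tau\le N_{0})\ge\delta_{0}$ for some absolute $N_{0}$ and $\delta_{0}>0$, uniformly in $q$. The point is that, by Kerkhoff, very unbalanced excursions are exponentially unlikely, so whp within a bounded number of accelerated steps the induction visits all three ``winner'' roles, which by the characterization of complete paths is exactly what it means to have traversed a complete sub-path. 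Once this is in place, choosing $T$ so that $3/T<\delta_{0}/2$ produces the asserted constant $C^{-1}$.
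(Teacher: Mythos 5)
Your plan correctly isolates the crux (a uniform-in-$q$ lower bound on the probability of acquiring a positive initial sub-path with controlled norm growth), and your easy half is right: if $\gamma$ has a positive prefix $\gamma_{0}$ then $m(B_{\gamma}q)\ge m(B_{\gamma_{0}}q)\ge M(q)$, so stopping just after such a prefix of bounded norm gives both inequalities deterministically. But the two estimates you propose for the hard half do not hold. First, the union bound $P_{q}(M(B_{\gamma}q)>TM(q)\mid\pi)\le 3/T$ is not a consequence of the Kerckhoff lemma: that lemma bounds the probability only over the restricted family $\Gamma_{\alpha}(\pi)$ of single-winner excursions, whereas along a general path every coordinate $(B_{\gamma}q)_{\beta}$ grows at every arrow that $\beta$ loses, and the excursion bounds accumulate rather than union-bound. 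Indeed, for the natural antichain of minimal paths with $M(B_{\gamma}q)>TM(q)$ the correct decay is only $T^{-\delta}$ with some small $\delta>0$ (this is the content of Theorem \ref{thm:proba-estimate}, which is proved \emph{after} and \emph{using} the present theorem), so invoking a $3/T$ bound here is both unjustified and essentially circular.

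Second, the claim $P_{q}(\tau\le N_{0})\ge\delta_{0}$ uniformly in $q$, with $\tau$ the first appearance of a complete sub-path, is false. Take $q=(1,1,K)$ with $K$ large: the measure $\nu_{q}$ concentrates on length vectors with $c\lesssim 1/K$, and for such vectors the third letter cannot become a winner (i.e.\ the largest interval) until the Euclid-type dynamics has reduced $a$ and $b$ below $c$, which typically requires on the order of $\log K$ accelerated steps; hence $P_{q}(\tau\le N_{0})\to 0$ as $K\to\infty$ for any fixed $N_{0}$. This is precisely why the paper (following Theorem A.2 of \cite{AGY}) organizes the argument around \emph{norm growth} rather than \emph{time}: it inducts on the cardinality $k$ of a subset of coordinates already comparable to $M(B_{\gamma}q)$, waits an unbounded number of steps for a new index to become a winner, and uses the Kerckhoff lemma at each stage only to guarantee that, on a set of probability bounded below, $M(B_{\gamma}q)$ increases by at most a fixed factor during that wait. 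Some device of this kind (a stopping rule tied to norm ratios, not to a bounded number of steps) is unavoidable, and without it your argument does not close.
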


\begin{proof}
The main idea of the proof comes from \cite{AGY}: one should consider all the subsets $\EuScript{A'} \subset \EuScript{A}$ of fixed cardinality $k$ and prove that for $1\le k\le 3$ there exists $C>1$ such that
\begin{equation}\label{with_k}
P_{q}(M(B_{\gamma}q)<Cmin\{m_k(B_{\gamma}q),M(q)\})>C^{-1}.
\end{equation}

In our case $\EuScript{A}=1,2,3$, so we have to consider only two types of subsets $\EuScript{A'}$ of $\EuScript{A}$: with cardinality 1 or 2. The proof is by induction on $k$.  We will provide the whole procedure for the step from $k=1$ to $k=2$; the next step is similar. 
So, our main goal is to prove the following statement for $k=2$:
$$P_{q}(M(B_{\gamma}q)<Cmin\{m_{k}(B_{\gamma}q,M(q))\}|\pi)>C^{-1}.$$

\begin{itemize}
\item Base of the induction: for $k=1$ the statement is obvious.

\item Step of the induction: let us fix $k=1$ and prove the statement for the subsets of two elements. 
\end{itemize} 

Let $\Gamma$ be the set of minimal paths starting at the fixed permutation $\pi$ such that there exists $C_{0}>1:$ for any $\gamma \in \Gamma$
$M(B_{\gamma}q)<C_{0}min\{m_{1}(B_{\gamma}q),M(q)\}$. 
This property implies that $M(B_{\gamma}q)<C_{0}M(q).$

\noindent  Now, $m_1(B_{\gamma}q)=max_{\EuScript{A'}: |\EuScript{A'}|=1}m_{\EuScript{A'}}(B_{\gamma}q).$
So, by definition, there exists a subset $\Gamma_{1}\subset \Gamma$ and $\EuScript{A'}$ with cardinality such that $P_{q}(\Gamma_{1})>C_{1}^{-1}$ and for any $\gamma\in \Gamma_{1}$ 
$$m_{1}(B_{\gamma}q) = m_{\EuScript{A'}}(B_{\gamma}q).$$
Without loss of generality one can assume that $\EuScript{A'}=\left\{1\right\}$.

\noindent Now, we use the acceleration - we iterate (and then continue the path from $\Gamma_{1}$) up to the moment the reduction will act on the second pair of subintervals. 
More precisely, we fix $\gamma_{1}\in \Gamma_{1}$ and consider $\gamma_{2} = \gamma_{1}\hat{\gamma_{1}}$ with minimal length such that the path $\hat \gamma_{1}$ ends by the permutation that starts with $2$, not with $1$. 
These paths $\gamma_{2}$ form a set $\Gamma_{2}$. So, for some $C_{2}>1,$
\begin{equation}\label{c2}
P_{q}(\Gamma_{2})>C_{2}^{-1}
\end{equation}
and 
\begin{equation}\label{C2}
M(B_{\gamma_{2}}q)<C_{2}M(B_{\gamma_{1}}q).
\end{equation}
The statement \ref{C2} follows from the fact that the accelerated induction applied to the balanced vector results in a balanced vector again (see the proof of the Kerckhoff lemma). Indeed, $$M(B_{\gamma_{2}}q)=max \{nq_1+1; (n+1)q_1+q_2;2nq_1+q_2+q_3\}$$ while $$M(B_{\gamma_{1}}q)=max\{q_1,nq_1+q_2, nq_1+q_3\}$$ since $\gamma_2$ is minimal.

\noindent Now we construct $\Gamma_{3}$ that contains $\gamma_{3} = \gamma_{2}\hat{\gamma_{2}}$ where $\gamma_{2}\in \Gamma_{2}$ and $\hat\gamma_{2}$ is such that all arrows contain $b$ or $c$ as the winners except the last one (so, at the end we obtain a permutation starting with 1 again) and 
\begin{equation}\label{6}
(B_{\gamma_{3}}q)_{1}\le 6(B_{\gamma_{2}}q)_{1}.
\end{equation}

\noindent Since the condition \ref{6} does not always hold, one has to estimate the measure of the parameter space where (\ref{6}) is true. Let us express (\ref{6}) for this purpose in terms of the Rauzy induction. We denote by $x$ the number of steps when $b$ was the winner, and by $y$ - the number of steps where $c$ was the winner. So, the induction is described by the following matrix: 
$$\begin{pmatrix}
n(xy+2y+x+1)+1 & xy+x+y+1 & y\\
n(2xy+4y+2+x)+1 & 2xy+x+2y+2 & 2y \\
n(xy+2y+2x+3)+1 & (x+1)y+2(x+1) & y+1
\end{pmatrix}$$
We evaluate now the norms of the rows of the matrix. The condition \ref{6} means that 
\begin{equation}\label{xy}
xy+2y+x+1\le 6
\end{equation}
We will use the estimation \ref{xy} in some calculations later. 

The Kerckhoff's lemma together with the property (\ref{6}) imply that $P_{q}(\Gamma_{3}|\gamma_{2})>\frac{1}{6}$ and $P_{q}(\Gamma_{3})=P_{q}(\Gamma_{3}|\gamma_{2})P_{q}(\Gamma_{2})>\frac{1}{6C_{2}}$.

Now, if $M(B_{\gamma_{3}}q)>6M(B_{\gamma_{2}}q)$, then we take the minimal path $\gamma_{4}$ between $\gamma_{3}$ and $\gamma_{2}$ such that the same condition holds. For the obtained $\gamma_{4}$ one can check directly using the previous calculations that for $\alpha$ equals to 2 or 3 $$M(B_{\gamma_{4}}q)=(B_{\gamma_{4}}q)_{\alpha}\le 12M(B_{\gamma_{2}}q)$$ (more precisely, it follows from (\ref{xy})).

\noindent Moreover, in this case we have that $$(B_{\gamma_{4}}q)_{1}>(12C_{0}C_{2})^{-1}M(B_{\gamma_{4}}q).$$  Note also, that $P_{q}(\Gamma_{4})\ge P_{q}(\Gamma_{3})>\frac{1}{6C_{2}},$ where $\Gamma_4$ is a set of all $\gamma_4$ we described above.
So, we have $\EuScript{A'}=\{1,\alpha\}$ for which the statement of the theorem holds with the constant $C=12C_{0}C_{2}$.

If $M(B_{\gamma_{3}}q)\le 6M(B_{\gamma_{2}}q)$, then for this $\gamma_{3}$ we can consider the following $\EuScript {A'}$ for which the statement of the theorem holds:
$\EuScript{A'}=\{1,\beta\},$ where $\beta$ is the loser of the last arrow (1 was the winner, so $\beta$ is 2 or 3).

Anyway, there exists the set $\Gamma_{4}$ such that $$P_q(\Gamma_{4} | \pi) \ge P_q(\Gamma_3 | \pi) > (2C_2)^{-1},$$
therefore the statement of the theorem holds for the set of cardinality 2.

\end{proof}
\begin{remark} Note that all inequalities and estimations for the step $2\rightarrow 3$ are the same; the only difference is a description for $\Gamma_{3}$. In this case $\hat{\gamma_{2}}$ corresponds to the following scheme of the Rauzy induction: several iterations with the same winner (for instance, 1) and then one iteration with the different winner (2 or 3). Then, the matrix of the cocycle depends on 2 parameters instead of 3. 
\end{remark}

\section{Exponential tails}\label{ET}
In the current section we obtain more subtle estimations on the conditional probabilities that were discussed above. Informally, our main goal is to replace $C^{-1}$ in the right part of our statement by $C^{-\delta},$ where $\delta$ is some positive constant. 
We have the following

\begin{theorem} \label{thm:proba-estimate}
For every $\hat{\gamma}$ there exist $\delta>0, C>0$ such that for every $q\in \mathbb R^{\EuScript{A}}_{+}$ and every $T>1$

$P_{q}(\gamma$ cannot be written as $\gamma_{s}\hat{\gamma}\gamma_{e}$ and $M(B_{\gamma}q)>TM(q))\le CT^{-\delta}.$
\end{theorem}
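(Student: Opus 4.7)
The plan is to iterate Lemma~4 via the (strong) Markov property of the induction on the parameter space, bootstrapping the single-threshold estimate into a polynomial tail. The exponent will come out to $\delta=-\log\rho/\log(C_{0}\cdot 2^{M})$, where $C_{0}$ is a uniform bound on the one-step expansion of the cocycle: a single Rauzy step either fixes an entry of $q$ or adds to it some other entry bounded above by $M(q)$, and entries of $q$ are monotone non-decreasing along the path.

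First I would introduce doubling stopping times along a random path $\gamma$ issued from $\pi$. Set $\tau_{0}=0$, $q_{0}=q$, and recursively
\[
\tau_{i+1}=\inf\bigl\{t>\tau_{i}:M(B_{\gamma_{[\tau_{i},t]}}q_{i})>2^{M}M(q_{i})\bigr\},\qquad q_{i+1}=B_{\gamma_{[0,\tau_{i+1}]}}q,
\]
with the convention $\tau_{i+1}=\infty$ if no such $t$ exists. Since the induction is Markov in $(\pi,q)$ (the transition laws being compatible with the identity $\nu_{q}(B^{*}_{\gamma}A)=\nu_{B_{\gamma}q}(A)$ already used in the paper), the sub-path $\gamma_{[\tau_{i},\tau_{i+1}]}$ conditioned on the data at $\tau_{i}$ is distributed as a fresh induction from $(\pi_{i},q_{i})$; Lemma~4 applied to each such segment yields
\[
P_{q}\bigl(\tau_{i+1}<\infty,\ \gamma_{[\tau_{i},\tau_{i+1}]}\text{ avoids }\hat\gamma\bigm|\text{data at }\tau_{i}\bigr)\le\rho.
\]
Iterating over $i=0,\ldots,k-1$ gives
\[
P_{q}\bigl(\tau_{k}<\infty,\ \gamma_{[\tau_{i-1},\tau_{i}]}\text{ avoids }\hat\gamma\text{ for all }1\le i\le k\bigr)\le\rho^{k}.
\]

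Next I would convert the number of stopping times to total norm growth. Minimality of $\tau_{i+1}$ combined with the one-step bound gives $M(q_{i})\le(C_{0}\cdot 2^{M})^{i}M(q)$; moreover, if $\gamma$ does not reach $\tau_{k}$, then $\gamma$ ends in some interval $[\tau_{i},\tau_{i+1})$ with $i<k$, so $M(B_{\gamma}q)\le 2^{M}M(q_{i})\le(C_{0}\cdot 2^{M})^{k}M(q)$. Contrapositively, $M(B_{\gamma}q)>TM(q)$ forces $\tau_{k}$ to lie inside $\gamma$ for every $k\le\lfloor\log T/\log(C_{0}\cdot 2^{M})\rfloor$. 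Taking $k$ equal to this floor, and observing that $\gamma$ avoiding $\hat\gamma$ \emph{a fortiori} forces every sub-segment $\gamma_{[\tau_{i-1},\tau_{i}]}$ to avoid $\hat\gamma$, we conclude
\[
P_{q}\bigl(\gamma\text{ avoids }\hat\gamma,\ M(B_{\gamma}q)>TM(q)\bigr)\le\rho^{k}\le CT^{-\delta}
\]
with $\delta=-\log\rho/\log(C_{0}\cdot 2^{M})>0$ and $C=\rho^{-1}$.

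The main obstacle is a precise formulation of the Markov restart identity at the stopping times $\tau_{i}$ for the measures $P_{q}(\cdot\mid\pi)$ on admissible paths: one must justify that, conditioned on reaching $\tau_{i}$ in state $(\pi_{i},q_{i})$, the law of $\gamma_{[\tau_{i},\cdot]}$ coincides with that of a fresh induction issued from $(\pi_{i},q_{i})$. Once this is in place, everything else is a calculus of constants. A minor but harmless subtlety is that ``$\gamma$ does not contain $\hat\gamma$ as a subword'' is strictly stronger than ``each $\gamma_{[\tau_{i-1},\tau_{i}]}$ avoids $\hat\gamma$'' (a copy of $\hat\gamma$ could straddle a stopping time), but the inclusion is in the correct direction for an upper bound, so nothing is lost.
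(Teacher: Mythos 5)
Your proposal is correct and follows essentially the same route as the paper: decompose the path by minimal stopping times at which the cocycle norm crosses successive geometric thresholds, apply the preceding lemma conditionally at each stage (via the restart identity $\nu_{q}(B^{*}_{\gamma}A)=\nu_{B_{\gamma}q}(A)$, which the paper also uses implicitly in writing $P_{q}(\Gamma_{i+1}\mid\gamma_{i})\le\rho$), multiply to get $\rho^{k}$, and convert $k$ into $T^{-\delta}$. You are somewhat more explicit than the paper about the one-step overshoot constant and the direction of the $\hat\gamma$-avoidance inclusion, but these are refinements of the same argument, not a different one.
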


\begin{remark}
The restriction on the paths means that we only consider paths that do not contain $\hat{\gamma}$ as a proper part.
\end{remark}

The most important point of the argument we use is that the estimates that we prove in Theorem \ref{thm:proba-estimate} are uniform with respect to $q$.

The proof of the theorem is based on the following two lemmas that can be considered as the corollaries from Theorem \ref{A2}.

\begin{lemma}
Let us denote by $\Pi$ a group of paths that start on a permutation $\pi$. 
There exists $C>1$ such that for any permutation $\pi$
$$P_{q}(\gamma \in \Pi, M(B_{\gamma}q)>CM(q), m(B_{\gamma}q)<M(q)|\pi)<1-\frac{1}{C}.$$
\end{lemma}
\begin{proof}
We have to evaluate the probability of the event complimentary to the event we worked with in Theorem \ref{A2}:
$$M(B_{\gamma}q)<Cmin\{m(B_{\gamma}q, M(q)\}\leftrightarrow X \cup Y,$$
where $X$ is identified by
$$\left\{
\begin{aligned} 
M(B_{\gamma}q)<Cm(B_{\gamma}q)\\ 
m(B_{\gamma}q) < M(q).\\ 
\end{aligned} \right.
$$
and $Y$ is the following:
$$\left\{
\begin{aligned} 
M(B_{\gamma}q) < CM(q)\\ 
M(q)<m(B_{\gamma}q). \\ 
\end{aligned} \right.
$$
Complement to $X\cup Y = X'\cap Y',$ where $X'$ is the following:
$$\left\{
\begin{aligned} 
M(B_{\gamma}q)>CM(q)\\ 
m(B_{\gamma}q) > M(q).\\ 
\end{aligned} \right.
$$
and $Y'$ is 
$$\left\{
\begin{aligned} 
M(B_{\gamma}q)>Cm(B_{\gamma}q)\\ 
m(B_{\gamma}q) < M(q).\\ 
\end{aligned} \right.
$$
So, the probability of the event
$$\left\{
\begin{aligned} 
M(B_{\gamma}q)>CM(q)\\ 
m(B_{\gamma}q) < M(q).\\ 
\end{aligned} \right.
$$
is less than the probability of $Y'$ and so less than $1-\frac{1}{C}.$
\end{proof}
The next lemma follows from the previous one and is also important for the proof of Theorem \ref{thm:proba-estimate}.
\begin{lemma}\label{PL}
For any $\hat{\gamma}$ there exist $M\ge 0, \rho<1$ such that for any $\pi, q\in \mathbb{R}^{\EuScript{A}}_{+}$ 

$P_{q}(\gamma$ can not be written as $\gamma_{s}\hat{\gamma}\gamma_{e}$ and $M(B_{\gamma}q)>2^{M}M(q)|\pi) \le \rho.$
\end{lemma}

\begin{proof}
Lets fix $M_{0}$ large enough and let $M=2M_{0}$. Let us consider the set of minimal paths
$$\Gamma = \{\gamma: M(B_{\gamma}q)>2^{M}M(q)\}$$
such that $\gamma$ can not be written in a way $\gamma_{s}\hat{\gamma}\gamma_{e}$.

Then any path in $\Gamma$ can be written as $\gamma=\gamma_{1}\gamma_{2}$ where $\gamma_{1}$ is a  minimal path such that 
$$M(B_{\gamma_{1}}q)>2^{M_{0}}M(q).$$
Let us denote the set of such $\gamma_{1}$ by $\Gamma_{1}$.
So $\Gamma_{1}$ is disjoint in terms of \cite{AGY} which means that any path is not a part of some other path from the same set (it follows directly from minimality). 
Now we consider the subset $\tilde \Gamma_{1}$ of this set $\Gamma_{1}$ consisting of all $\gamma_{1}$ such that 
$$M_{\EuScript{A'}}(B_{\gamma_{1}}q)\ge M(q),$$ where $\EuScript{A'}$ is a proper set of $\EuScript{A}$ (the last property means that $m(B_{\gamma_{1}}q)\ge M(q)$).

By the previous lemma we have that 
$$P_{q}(\Gamma_{1}\setminus \tilde\Gamma_{1}|\pi)<1-\frac{1}{C}$$
with some constant $C>1.$

Now we use the strategy from \cite{AR}: we fix some permutation $\pi_{e}$ and consider the path $\gamma_{\pi_{e}}$ that will be the shortest path starting at $\pi_{e}$ and containing $\hat{\gamma}$ as a second part: $\gamma_{\pi_{e}}=\gamma_{s}\hat{\gamma}.$
Then, if $M_{0}$ is large enough, we can assume that 
\begin{equation}
||B_{\gamma_{\pi_{e}}}||<\frac {2^{M_{0}-1}}{3}
\label{B}
\end{equation}
If $\pi_{e}$(that could be any permutation) is the end of $\gamma_{1}\in\Gamma_{1}$, then
$$P_{q}(\Gamma|\gamma_{1})\le 1 - P_{B_{\gamma_{1}}q}(\gamma_{\pi_{e}}|\pi_{e})$$
because $\gamma$ does not contain $\hat{\gamma}$ as a part (it is a condition of the lemma). 

If $\gamma_{1}\in \tilde\Gamma_{1}$, the last probability can be estimated directly in terms of the measures of subsimplices of the original simplex: if $N(q) = q_{1}q_{2}q_{3},$ then
$$P_{B_{\gamma_{1}}q}(\gamma_{\pi_{e}}|\pi_{e}) = \frac {N(B_{\gamma_{1}q})}{N(B_{\gamma_{\pi_{e}}B_{\gamma_{1}}q)}}\ge 2^{-6M_{0}},$$
because $N(B_{\gamma_{1}q})\ge M(q)^{3}$ if $\gamma_{1}\in \Gamma_{1}$ and, on the other hand, $M(B_{\gamma_{1}})q<2^{M}M(q)$ that, together with (\ref{B}), provides the estimation for the denominator.

Now, if $P_{q}(\tilde\Gamma_{1})\ge \frac{1}{2C},$ then
$$
P_{q}(\Gamma_{1})=P_{q}(\Gamma_{1}\setminus \tilde\Gamma_{1})+P_{q}(\tilde \Gamma_{1})P_{q}(\Gamma_{1}|\tilde\Gamma_{1})\le$$

$$\le 1-P_{q}(\tilde\Gamma_{1})+P_{q}(\tilde\Gamma_{1})\cdot(1-2^{-6M_{0}})=1-P_{q}(\tilde\Gamma_{1})\cdot2^{-6M_{0}}\le 1-\frac{2^{-6M_{0}}}{2C}.
$$
If $P_{q}(\tilde\Gamma_{1})<\frac{1}{2C},$
then $P_{q}(\Gamma_{1})<1-\frac{1}{C}+\frac{1}{2C}=1-\frac{1}{2C}.$

So, lemma \label{PL} holds with $\rho = 1 - \frac{2^{-6M_{0}}}{2C}$.
\end{proof}

\begin{proof}
Now we turn to the proof of the theorem.
Let us use $M$ and $\rho$ from the previous lemma. We denote by $k$ the smallest integer such that $T\ge 2^{k(M+1)}$. We denote by  $\gamma$ the minimal path that does not include $\hat{\gamma}$ as a part and such that $M(B_{\gamma}q)>2^{k(M+1)}M(q)$. Then $\gamma$ can be considered as a composition of $\gamma_{1}\gamma_{2}...\gamma_{k}$ where $M(B_{\gamma_{i}}q)>2^{i(M+1)}M(q)$ and $\gamma_{i}$ is minimal with this property. The sets of corresponding $\Gamma_{i}$ are disjoint due to minimality. Lemma \ref{PL} implies that
$$P_{q}(\Gamma_{i+1}|\gamma_{i})\le \rho.$$
So $P_{q}(\Gamma)<{\rho}^{k}$. The result follows from the definition of $k$.
\end{proof}

\section{The roof function and the suspension flow}\label{RF}
\subsection{The roof function}
The construction of the roof function that we present in the current section is based on the idea of renormalization provided by Veech in \cite{V}.

Informally, we take a point $x\in X$ of the parameter space and consider the special system of isometries $S=S(x)$ that corresponds to this point. We denote by $\lambda=(a,b,c)$ the vector of the lengths of subintervals of $S$, and $\pi$ is the corresponding permutation. Then one applies the Rauzy induction to the system $S$; it results in the system $S'$ with the vector of lengths of subintervals $\lambda'=(a',b',c').$ The roof function $r(x)=r(\lambda,\pi)=-\log||\lambda'||$. In other words, the roof function is the first return time to some small subsimplex in the parameter space. We proceed with the formal definition (the same one was used in \cite{AGY} and \cite{AR}).

Let us fix some positive complete path $\gamma_{*}$ starting and ending at the same permutation $\pi$, and the subsimplex of the parameter space that corresponds to this path $\Delta_{\gamma_{*}}$. 

We are interested in the first return map to the subsimplex $\Delta_{\gamma_{*}}$. So, the connected components of the domain of this map are given by the $\Delta_{\gamma\gamma_{*}}$ where $\gamma$ is a path that contains $\gamma_{*}$ as part but does not start with $\gamma_{*}\gamma_{*}.$ 

Then, the first return map $T$ restricted to such a component is the following: 
\begin{equation}\label{T}
T(\lambda, \pi)=\Big(\frac{R_{\gamma}\lambda}{||R_{\gamma}\lambda||},\pi),
\end{equation}
where $R=(B^{T})^{-1}.$

\begin{definition}
The roof function is the return time to the connected component described above:
$$r(\lambda, \pi) = -\log||(B^{T}_{\gamma})^{-1} \lambda||,$$
where $\lambda = (a,b,c)$ is a vector of lengths and $\pi$ is a corresponding permutation.
\end{definition}

\begin{remark}
As it was mentioned in \cite{AGY} and \cite{AR}, with such a definition one works with the precompact sections because the path $\gamma_{*}$ is positive.
\end{remark}

\subsection{The suspension flow}
We use a standard definition of the suspension flow constructed by the shift transformation $\Theta$ and the roof function $r$ (it is an analogue of the construction from \cite{V} for the Teichm\"uller flow). The suspension flow renormalizes the length of the interval to $1$. 

Formally, the definition is as follows: 
The flow $\Phi(\Theta, r)$ is defined on a space $Y=((\lambda, \pi,t)\in \Theta \times \mathbb R: 0 \leq t \leq r(\lambda, \pi))$ 
and the points $(\lambda, \tau, r(\lambda, \pi))$ and $(\Theta(\lambda,\pi),0)$ are identified. It acts in the following way: 
$$\Phi_t(\lambda,\pi,s)=(\lambda,\pi,t+s)$$ 
whenever $s+t \in [0,r(\lambda,\pi)].$

The space $Y$ where the flow we work with is defined is the direct product of the parameter space $X$ and $\mathbb R$ modulo equivalence relation. 

\subsection{Correctness of the suspension model}

The flow we work with is the suspension over $T$ with roof function $r$. In this suspension model, the orbits that do not come back to the fixed precompact section escape the control. 
However, the following properties of the Markov map hold:
\begin{enumerate}
\item the BIP property implies that each small simplex of the Markov partition (let's say that it $\Delta_{\gamma}$ where $\gamma$ is a corresponding complete path in the Rauzy graph) is mapped on the whole parameter space $X$, and the map is surjective;
\item the Markov map $T$ is uniformly expanding and so the Jacobian of the map from $\Delta_\gamma$ to $X$ is bounded.
\end{enumerate}

Let us denote by $RG(\Delta_{\gamma})$ the set of points of the Rauzy gasket inside of $\Delta_{\gamma}$. The properties mentioned above imply that $Hdim(RG(\Delta_{\gamma}))=Hdim (RG),$ where $RG$ is the Rauzy gasket and Hdim is the Hausdorff dimension. 

The same argument can be used for $\Delta_{\gamma}$ and $\Delta_{\gamma'},$ where $\gamma'=\gamma\hat\gamma\gamma$ for some suitable $\hat\gamma$. Therefore, the orbits that escape the control do not contribute to the Hausdorff dimension of the Rauzy gasket, and our suspension model is correct. 

\section{Exponential tails of roof function}\label{ETRF}
In this section we prove that the roof function constructed above has \emph{exponential tails}. We follow the strategy from \cite{AGY} and \cite{B}.
\begin{definition}
A function $f$ has \emph{exponential tails} if there exists $\sigma > 0$ such that $\int_{\Delta}e^{\sigma f}dLeb<\infty.$
\end{definition}
\begin{theorem}
The roof function $r$ defined above has exponential tails.
\end{theorem}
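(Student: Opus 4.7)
The plan is to reduce the exponential tail property to the tail decay estimate $\mathrm{Leb}\{r>T\}\le C e^{-\delta T}$ for some $\delta>0$, from which $\int e^{\sigma r}\,d\mathrm{Leb}<\infty$ for any $\sigma<\delta$ follows immediately by the layer‑cake formula. On a connected component $\Delta_{\gamma_*\gamma_0\gamma_*}$ of the domain of the first return map, the roof function is $r=-\log\|(B^*_{\gamma_*\gamma_0\gamma_*})^{-1}\lambda\|$. Because $\gamma_*$ is a positive complete path, $B^*_{\gamma_*}$ has strictly positive entries, hence $\Delta_{\gamma_*}$ is precompact in the open simplex; a standard bounded distortion argument for positive cocycles then shows that on $\Delta_{\gamma_*\gamma_0\gamma_*}$ the quantities $r$, $\log\|B^*_{\gamma_*\gamma_0\gamma_*}\|$ and $\log M(B_{\gamma_*\gamma_0\gamma_*}q)$ (for any fixed $q$ in the interior) agree up to an additive constant uniform in the middle piece $\gamma_0$.

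Next I would verify that the first‑return condition on the components of the return map is exactly equivalent to an avoidance condition on $\gamma_0$. Indeed, if $\gamma_0$ contained $\gamma_*$ as a proper subpath, the orbit would re‑enter $\Delta_{\gamma_*}$ strictly before the final $\gamma_*$, contradicting minimality; conversely every $\gamma_0$ with no occurrence of $\gamma_*$ does give a genuine first‑return component. So the union of components with $r>T$ is indexed by paths $\gamma_0$ avoiding $\hat\gamma:=\gamma_*$ such that $M(B_{\gamma_*\gamma_0\gamma_*}q)\gtrsim e^T M(q)$.

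Applying Theorem \ref{thm:proba-estimate} with $\hat\gamma=\gamma_*$ and $T$ replaced by a suitable multiple of $e^T$ yields a conditional probability bound of the form $P_q(\text{this set})\le C e^{-\delta T}$. To convert this into a Lebesgue estimate I use once more that $\gamma_*$ is positive: on $\Delta_{\gamma_*}$ the Lebesgue measure and the conditional probabilities $P_q$ with $q$ ranging over a precompact set in $\mathbb{R}^3_+$ are mutually comparable with constants depending only on $\gamma_*$. Combining everything gives $\mathrm{Leb}\{r>T\}\le C'e^{-\delta T}$ and hence exponential tails.

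The main obstacle is the bookkeeping between the three related quantities $r$, $\log\|B^*_\gamma\|$ and $\log M(B_\gamma q)$, and the identification of first‑return components with $\gamma_*$‑avoiding paths, so that Theorem \ref{thm:proba-estimate} applies cleanly. Both steps are entirely analogous to the proof of Theorem 4.6 in \cite{AGY} (and of the corresponding statement in \cite{AR}); all the genuinely new analytic work has already been done in the distortion estimate of the previous section, so here one is essentially assembling the estimate from positivity of $\gamma_*$ and Theorem \ref{thm:proba-estimate}.
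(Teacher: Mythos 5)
Your proposal follows essentially the same route as the paper's (itself only a sketch deferring to \cite{AGY}): compare $r$ with $\log M(B_{\gamma}q)$ using positivity of $\gamma_{*}$ and precompactness of $\Delta_{\gamma_{*}}$, identify the first-return components with paths whose middle piece avoids $\hat\gamma=\gamma_{*}$, apply Theorem \ref{thm:proba-estimate}, and transfer from $P_{q}$ back to Lebesgue measure. The only inessential slip is your claimed converse (that every $\gamma_{*}$-avoiding $\gamma_{0}$ yields a genuine first-return component, which can fail for occurrences of $\gamma_{*}$ straddling the junctions), but only the forward implication is needed for the upper bound, so the argument stands.
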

\begin{proof}
This theorem is a direct corollary from Theorem \ref{thm:proba-estimate}.
The main idea is the same as was used in the case of IET: $-\log||(B^{T}_{\gamma_{*}})^{-1} \lambda||$ is the ``Teichm\"uller" time needed to renormalize our interval to length 1. Then  time is divided into pieces of exponential size. For each piece, we apply Theorem \ref{thm:proba-estimate}. 

Indeed, in section \ref{DE} we constructed the set of measures $\nu_{q}$ that depended on vector $q$. Let us consider $q_{0}=(1,1,1)$ and the corresponding measure $\nu_{q_{0}}$. The pushforward of this measure under radial projection yields a smooth function on the parameter space of the flow, say, $\nu.$ In order to prove the theorem, it is enough to show that 
\begin{equation}\label{nu}
\nu\{x\in \Delta_{\gamma_{*}}:r(x)\ge log T\}\le CT^{-\delta}
\end{equation}
for some $C$ and some $\delta.$

The connected component of the domain of the Markov map $T(\lambda,\pi)$ that intersects the set $W=\{x:\{x\in \Delta_{\gamma_{*}}:r(x)\ge log T\}\le CT^{-\delta}\}$ is of the form  $\Delta_{\gamma}\cap U$ for some $\gamma$.

One can easily see that $\gamma$ can not be a concatenation of more than three copies of $\gamma_{*}$. This restriction comes from the fact that we work with the first return maps. Also, the definition of the roof function and the definition of the set $W$ imply that  $$M(B_{\gamma}q_{0})>C^{-1}T,$$ where $C$ is some constant depending on $\gamma_{*}$. 

Now we estimate the measure of the interesting set in terms of probabilities of corresponding events:
$\nu\{x\in \delta_{\gamma_{*}}:r(x)\ge log T\}\le P_{q_{0}}(\gamma$ does not contain some $\hat\gamma$ as a proper set and $M(B_{\gamma}q_{0})>C^{-1}T|\pi)<CT^{-\delta}.$
The statement of the theorem follows now from Theorem \ref{thm:proba-estimate}.
\end{proof}

\section{Proof of the main theorem}\label{HD}
In this section we prove Theorem \ref{thm:main} mainly using the ideas from \cite{AD}. First, we introduce the notion of the fast decaying Markov map and prove that the Markov map we work with satisfies this property. Then, deduce our main result from Theorem 26 in \cite{AD}. 

\subsection{Fast decaying Markov maps}

Let $\Delta$ be a measurable space and $T: \Delta \rightarrow \Delta$ be a Markov map. We will denote the corresponding Markov partition by $\Delta_{(l)}, l\in \mathbb {Z}$. 
Let us consider $\underline l=(l_{1}, l_{2},\dotsc,l_{n})$ where all $l_{i}$ are integers; by $\Delta^{\underline l}$ we denote $x\in\Delta$ such that $T^{j-1}(x)\in \Delta_{l_{j}}$ for all $1\le j\le n.$
We say that $n$ is a \emph{depth} of the partition done by $\Delta^{\underline l}$.
\begin{definition}
We say that $T$ is \emph{fast decaying} if there exists $C_{1}>0, \alpha_{1}>0$ such that 
\begin{equation}\label{fade}
\sum_{\mu(\Delta^{\underline l})\le\varepsilon}\mu(\Delta^{\underline l})\le C_{1}\varepsilon^{\alpha_{1}}
\end{equation}
for $0<\varepsilon<1.$
\end{definition}

\begin{lemma}
Exponential tails of the roof function implies fast decaying property of the Markov map.
\end{lemma}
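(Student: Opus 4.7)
The plan is to convert the exponential-tail hypothesis for $r$ into the required polynomial decay via the bounded distortion of the uniformly expanding map $T$.

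\textbf{Step 1 (cylinder measure versus roof function).} On each Markov cell $\Delta^{(l)}=\Delta_{\gamma}$ of the induction, $T$ is the projectivization of the linear map $A_{\gamma}=(B^{*}_{\gamma})^{-1}$ on the $2$-simplex, so a direct computation of the projective Jacobian gives
$$|JT(\lambda)| \;\asymp\; \|A_{\gamma}\lambda\|^{-3} \;=\; e^{3r(\lambda)},$$
in agreement with the explicit formula $|DT|=(na-(n-1))^{-3}$ that already appeared in the proof of uniform expansion. Bounded distortion of $\log|JT|$ on each cell (the second condition of uniform expansion, already verified) then upgrades this pointwise statement to the cell-wise comparison $\mathrm{Leb}(\Delta^{(l)})\asymp e^{-3 r_l}$ for any $r_l$ representative of $r$ on $\Delta^{(l)}$; and telescoping along an orbit yields $\mathrm{Leb}(\Delta^{\underline l})\asymp e^{-3 S_n r(x)}$ for any $x$ in a depth-$n$ cylinder $\Delta^{\underline l}$, where $S_n r$ is the Birkhoff sum of $r$ under $T$.

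\textbf{Step 2 (rewrite as a sublevel-set measure).} Under this comparison the condition $\mathrm{Leb}(\Delta^{\underline l})\le\varepsilon$ translates into $S_n r \ge c\log(1/\varepsilon)$ on $\Delta^{\underline l}$, for a uniform $c>0$. Since the cells of a fixed depth partition $\Delta$ modulo zero,
$$\sum_{\substack{\underline l:\; \mathrm{Leb}(\Delta^{\underline l})\le \varepsilon}} \mathrm{Leb}(\Delta^{\underline l}) \;\le\; C\,\mathrm{Leb}\bigl\{x\in\Delta:\; S_n r(x)\ge c\log(1/\varepsilon)\bigr\};$$
in the depth-$1$ version (the standard formulation of fast decay) the right-hand side is simply $\mathrm{Leb}\{r\ge c\log(1/\varepsilon)\}$.

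\textbf{Step 3 (exponential Chebyshev).} The exponential-tail hypothesis $\int_{\Delta}e^{\sigma r}\,d\mathrm{Leb}<\infty$ combined with Markov's inequality gives $\mathrm{Leb}\{r\ge T\}\le C'e^{-\sigma T}$, and substituting $T=c\log(1/\varepsilon)$ delivers the fast-decaying estimate with exponent $\alpha_1=c\sigma$.

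The only genuinely delicate ingredient is Step~1: one has to unwind the projective Jacobian for an accelerated Rauzy step and invoke the bounded-distortion estimate already proved for $T$, no new dynamical input being needed. If the definition of fast decay is read as also quantifying over depth, the same argument applies with $r$ replaced by $S_n r$, the exponential tail of the Birkhoff sums following from that of $r$ by invariance of the smooth $T$-invariant measure supplied by the folklore theorem cited in Section~6.
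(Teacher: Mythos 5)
Your proof is correct and follows essentially the same route as the paper: the identity $|DT(\lambda)|=e^{3r(\lambda)}$ computed on each accelerated Rauzy cell, the proportionality of cylinder measure to the inverse Jacobian, and then the tail estimate. The paper compresses your Steps 2 and 3 into the single sentence ``the implication follows from this statement and the fact that the measure of subsimplexes is proportional to $|DT|$'', so your write-up merely supplies the Chebyshev/Markov step that the paper leaves implicit.
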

\begin{proof}
First, one can check that $|DT(\lambda,\pi)|=e^{3r(\lambda,\pi)},$ where $\lambda=(a,b,c).$
Indeed, let us fix some point in the parameter space (equivalently, the system of isometries) and consider the action of the Rauzy induction on this system. 
Several steps of the [non-accelerated] induction are described by the following matrix (up to the order of rows): 
$$A=\begin{pmatrix}
1 & -n & -n\\
0 & 1 & 0 \\
0 & 0 & 1
\end{pmatrix}.$$
Then, $r(\lambda)=-log||A\lambda||=-log(a-n(b+c))=-log((n+1)a-n).$ 
On the other hand, in the proof of Lemma \ref{UEL} we showed that $|DT(\lambda,\pi)|=\frac{1}{{((n+1)a-n)}^{3}}=e^{3r(\lambda)}.$
Now, the lemma follows from this statement and the fact that the measure of subsimplices (Markov cylinders) is proportional to $|DT|$.
Indeed, the scheme of the check is as follows: one can replace the measures of subsimplices we sum up in \ref{fade} by the corresponding jacobians; the jacobians can be replaced by the exponent of the roof function; the last sum can be evaluated using the exponential tails of the roof function (namely, the convergence of the corresponding integral).
\end{proof}

\subsection{The proof of Theorem \ref {thm:main}}

Avila and Delecroix in \cite{AD} proved the following 
\begin{theorem}[AD,2013]\label{AD}
Assume that $T$ is fast decaying. For $n\ge 1$, let $X_{n}\subset \Delta$ be a union of some subsimplices ($\Delta^{l})$ of depth $n$ and let $X=\liminf X_{n}.$ Let 
$$\delta =-\lim_{n\rightarrow\infty} \frac{1}{n} \ln\mu(X_{n}),$$ then $HD(X)\le p-1-min(\delta,\alpha_{1}),$ where $\alpha_{1}$ is the fast decay constant.
\end{theorem}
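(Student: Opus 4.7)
The plan is to bound $HD(X)$ by a standard covering estimate that is fed by two independent inputs: the fast decay exponent $\alpha_1$ (which controls how concentrated the Markov partition can get) and the exponential rate $\delta$ (which controls how thin each $X_n$ is). First I would record the basic geometric dictionary. Since $T$ is uniformly expanding with bounded distortion on a John domain of dimension $p-1$, each depth-$n$ cell $\Delta^{\underline l}$ is bi-Lipschitz comparable to a Euclidean ball of volume $\mu(\Delta^{\underline l})$, so
\[
\mathrm{diam}(\Delta^{\underline l})\;\asymp\;\mu(\Delta^{\underline l})^{1/(p-1)}
\]
uniformly in $\underline l$ and $n$.

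Next I would use $X=\liminf X_n$ to cover $X$: for $x\in X$ and every large $n$, the unique depth-$n$ cell containing $x$ belongs to $X_n$, and uniform expansion forces these diameters to tend to zero, so the families $\{\Delta^{\underline l}:\underline l\in X_n\}$ are admissible covers for $s$-dimensional Hausdorff measure. Writing $t=s/(p-1)$, the proof reduces to showing that
\[
\Sigma_n(s)\;:=\;\sum_{\underline l\in X_n}\mu(\Delta^{\underline l})^t
\]
tends to zero whenever $s$ strictly exceeds the claimed bound. I would split $\Sigma_n(s)$ at a threshold $\varepsilon=\varepsilon(n)$. For the part with $\mu>\varepsilon$, the bound $\mu^t\le\varepsilon^{t-1}\mu$ (valid for $t<1$) together with $\sum_{\underline l\in X_n}\mu=\mu(X_n)\le e^{-(\delta-o(1))n}$ gives $\lesssim\varepsilon^{t-1}e^{-(\delta-o(1))n}$. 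For the part with $\mu\le\varepsilon$, a dyadic decomposition of the range of $\mu$ combined with fast decay (applied at depth $n$, which propagates from depth $1$ by bounded distortion) yields $\lesssim\varepsilon^{t+\alpha_1-1}$ as long as $t+\alpha_1>1$. Balancing the two by $\varepsilon^{\alpha_1}\asymp e^{-\delta n}$ makes $\Sigma_n(s)$ decay exponentially in $n$, and tracking which of $\delta$ and $\alpha_1$ is the binding constraint returns the dimension bound $p-1-\min(\delta,\alpha_1)$.

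The hardest part will be the geometric step above: making the measure-to-diameter comparison uniform across depths. This is where both the bounded distortion estimates of Section~3 and the John-domain hypothesis on $\Delta$ are essential, to guarantee that each composition of inverse branches is bi-Lipschitz between $\Delta$ and $\Delta^{\underline l}$ up to the normalizing scale $\mu(\Delta^{\underline l})^{1/(p-1)}$. A secondary technical point is the propagation of fast decay from depth $1$ to arbitrary depth $n$, which requires a short bootstrap again relying on bounded distortion; without it the splitting of the key sum is not available uniformly in $n$.
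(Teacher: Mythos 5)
The first thing to note is that the paper offers no proof of this statement to compare yours against: it is quoted verbatim as Theorem 26 of Avila--Delecroix \cite{AD} and used as a black box. So what follows is an assessment of your proposal on its own terms. The measure-theoretic half of your plan is sound and is indeed the mechanism that produces $\min(\delta,\alpha_{1})$: splitting $\sum_{\underline l\in X_{n}}\mu(\Delta^{\underline l})^{t}$ at a threshold, using $\mu(X_{n})\le e^{-(\delta-o(1))n}$ on the large cells and fast decay --- propagated from depth $1$ to depth $n$ via the bounded-distortion inequality $\mu(\Delta^{\underline l\,l'})\le C\mu(\Delta^{\underline l})\mu(\Delta^{(l')})$ --- on the small ones, is the right bookkeeping.

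The genuine gap is the step you yourself flag as the hardest one, and it cannot be repaired by the tools you invoke: the uniform comparison $\mathrm{diam}(\Delta^{\underline l})\asymp\mu(\Delta^{\underline l})^{1/(p-1)}$ is false for this class of maps. Bounded distortion controls the ratio $\det Dh_{\underline l}(x)/\det Dh_{\underline l}(y)$ between two points, and the John property controls the shape of the base $\Delta$; neither controls the eccentricity of $Dh_{\underline l}$ at a single point, i.e.\ the ratio of the largest to the smallest singular value of the derivative of an inverse branch. For projective branches $x\mapsto B_{\gamma}x/\|B_{\gamma}x\|$ this eccentricity grows without bound along typical long paths (this is precisely the non-degeneracy of the Lyapunov spectrum of the cocycle $B_{\gamma}$), so deep cylinders are exponentially elongated slivers rather than quasi-balls. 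Only the trivial direction $\mathrm{diam}(\Delta^{\underline l})\ge c\,\mu(\Delta^{\underline l})^{1/(p-1)}$ (the isodiametric inequality) survives, and that is the wrong direction for your covering estimate: to dominate $\sum\mathrm{diam}(\Delta^{\underline l})^{s}$ by $\sum\mu(\Delta^{\underline l})^{s/(p-1)}$ you need the upper bound $\mathrm{diam}\le C\mu^{1/(p-1)}$. With elongated cells the cover of $X_{n}$ by its own cylinders is inefficient, $\Sigma_{n}(s)$ need not tend to zero for the claimed range of $s$, and the argument stalls. A correct proof must either re-cover each elongated cell by balls at the scale of its width and recount, or bypass diameters of cylinders altogether; that geometric input is the actual content of the cited theorem and is missing from your argument.
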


Let us consider the Markov map $T$ and the corresponding Markov partition. Let us fix the total amount of the steps of the Rauzy induction (say, $n$). Then, $X_{n}$ in our case is the union of all Markov cells of the partition of the depth $n$ that do not correspond to the hole. 
Then, one can see that the set $X=\liminf X_{n}$ is exactly the Rauzy gasket. 
 
Now, in order to deduce Theorem \ref{thm:main} from Theorem \ref{AD} we only have to check that $\delta$ > 0. It follows directly from the fact that the size of simplices of the partition decreases exponentially fast because map $T$ is uniformly expanding (see \cite{MN} for details). This completes the proof of Theorem \ref{thm:main}.

\section{Multidimensional case}\label{MD}
Our result also can be generalized for the estimation of the Hausdorff dimension of the Rauzy gasket in any dimension $d>2$ (see \cite{AS} and \cite{DL} for definitions). Indeed, the key ingredients of the proof were the following: 
\begin{itemize}
\item the Rauzy graph is connected, and any complete path is positive; one can check that this statement actually holds in any dimension;
\item the Markov map is uniformly expanding; the proof for larger dimension is the same but the numerical estimates have to be modified; 
\item the Kerckhoff lemma: the proof is exactly the same since it is an adaptation of the statement for IET that was proved for any number of intervals; 
\item more subtle distortion estimates: the idea of the proof is the same but one has to remake the calculations since the combinatorics is slightly different. However, all the statements hold without any changes;
\item fast decaying property and the proof of the main theorem: this part does not require any changes. 
\end{itemize}

\end{document}